\newenvironment{algorithm-hbox}{\hbadness=10000\begin{algorithm}}{\end{algorithm}}
\colorlet{mylinkcolor}{violet}
\colorlet{mycitecolor}{YellowOrange}
\colorlet{myurlcolor}{Aquamarine}
\newtheorem{theorem}{Theorem}
\newtheorem{problem}[theorem]{Problem}
\newtheorem*{claim}{Claim}
\newtheorem{lemma}[theorem]{Lemma}
\newtheorem{tool}{Tool}
\theoremstyle{remark}
\newcommand{\set}[1]{\{#1\}}
\newcommand{\calB}{\mathcal{B}}
\newcommand{\calF}{\mathcal{F}}
\newcommand{\calG}{\mathcal{G}}
\newcommand{\red}{\color{red}}
\DeclareMathOperator\tw{tw}
\DeclareMathOperator\td{td}
\let\leq\leqslant
\let\geq\geqslant
\let\subset\subseteq
\let\epsilon\varepsilon
\let\setminus\smallsetminus
\renewenvironment{enumerate}{\begin{enumorig}[label=\textup{(\roman*)}, noitemsep, topsep=2pt plus 2pt, labelindent=.2em, leftmargin=*, widest=iii]}{\end{enumorig}}
\renewenvironment{itemize}{\begin{itemorig}[label=\textbullet, noitemsep, topsep=2pt plus 2pt, labelindent=.5em, labelsep=.5em, leftmargin=*]}{\end{itemorig}}
\let\old@setaddresses\@setaddresses
\def\@setaddresses{\bigskip\bgroup\parindent 0pt\let\scshape\relax\old@setaddresses\egroup}
\renewcommand{\root}[1]{\operatorname{root}(#1)}
\newcommand{\ve}{\operatorname{vec}}
\newcommand{\rep}[2]{\operatorname{repr}_{#1}(#2)}
\newcommand{\act}[1]{\operatorname{Active}(#1)}
\newcommand{\tree}{\operatorname{tree}}
\newcommand{\bdim}[1]{\operatorname{bdim}(#1)}
\newcommand{\eval}[1]{\operatorname{eval}(#1)}
\newcommand{\proj}[1]{\operatorname{proj}(#1)}
\renewcommand{\red}{\textsc{Red}\xspace}
\newcommand{\green}{\textsc{Green}\xspace}
\begin{document}
\title[Boolean dimension and tree-width]{Boolean dimension and tree-width}

\author[S.~Felsner]{Stefan Felsner}
\address[S.~Felsner]{Institut f\"ur Mathematik, 
Technische Universit\"{a}t Berlin
	%Strasse des 17. Juni 136\\
	%D-10623 Berlin\\
	%Germany
	}
\email{felsner@math.tu-berlin.de}

\author[T.~M\'{e}sz\'{a}ros]{Tam\'{a}s M\'{e}sz\'{a}ros}
\address[T.~M\'{e}sz\'{a}ros]{Institut f\"ur Mathematik,	
Freie Universit\"at Berlin
	%Arnimallee 3\\
	%D-14195 Berlin\\
	%Germany
	}
\email{tamas.meszaros@fu-berlin.de}

\author[P.~Micek]{Piotr Micek}
\address[P.~Micek]{%Theoretical Computer Science Department, 
Faculty of Mathematics and Computer Science, 
Jagiellonian University %and Institut f\"ur Mathematik 
	}
\email{piotr.micek@tcs.uj.edu.pl}

\thanks{Piotr Micek was partially supported by the National Science Center of Poland, grant no.\ 2015/18/E/ST6/00299.}
\thanks{Tamás Mészáros was supported by the Dahlem Research School of Freie Universit\"at Berlin.}

%\date{\today}

% 06A07 Combinatorics of partially ordered sets based within the same
% 05C35 Extremal problems
%\subjclass[2010]{06A07, 05C35}

\keywords{posets, tree-width, boolean dimension}

\begin{abstract}
Dimension is a key measure of complexity of partially ordered sets.
Small dimension allows succinct encoding. Indeed if $P$ has dimension
$d$, then to know whether $x \leq y$ in $P$ it is enough to check whether
$x\leq y$ in each of the $d$ linear extensions of a witnessing realizer.
Focusing on the encoding aspect, Ne\v{s}et\v{r}il and Pudl\'{a}k defined a more expressive version of dimension. 
A poset $P$ has boolean dimension at most $d$ if it is possible to
decide whether $x \leq y$ in $P$ by looking at the relative position of $x$ and
$y$ in only $d$ linear orders on the elements of $P$ (not necessarilly linear extensions).
We prove that posets with cover graphs
of bounded tree-width have bounded boolean dimension.
This stands in contrast with the fact that there are posets with cover graphs of tree-width three and arbitrarily large dimension.
This result might be a step towards a resolution of the long-standing open problem:
Do planar posets have bounded boolean dimension?
\end{abstract}
\maketitle

\section{Introduction}

Partially ordered sets, called \emph{posets} for short, are
combinatorial structures with applications in various mathematical
fields, e.g.\ set theory, topology, algebra, and theoretical
computer science.  The most important measure of a poset's complexity is
its dimension.  
%The \emph{dimension} $\dim(P)$ of a poset $P$ is the
%least integer $d$ such that points of $P$ can be embedded into $\R^d$
%in such a way that $x\leq y$ in $P$ if and only if the point of $x$ is
%below the point of $y$ with respect to the product order of $\R^d$.
%Though this definition justifies the geometric intuition behind the
%notion of dimension, usually we work with the following equivalent.  

A  \emph{linear extension} $L$ of a poset $P$ is a total order on the elements of $P$ such that if $x\leq y$ in $P$ 
then $x\leq y$ in $L$. A \emph{realizer} of a poset $P$ is a set $\set{L_1,\ldots,L_d}$ of
linear extensions of $P$ such that
\[
x \leq y\Longleftrightarrow (x\leq y\text{ in $L_1$}) 
\wedge \cdots \wedge (x\leq y\text{ in $L_d$}),
\]
for every $x,y\in P$.
The \emph{dimension} of $P$, denoted by $\dim(P)$, is the minimum size of its realizer.

%This reveals the second nature of the dimension: 
The realizers provide a way of succinctly encoding posets. Indeed, if a poset 
is given with a realizer witnessing dimension $d$, then a 
query of the form "is $x\leq y$?" can be answered by looking at the
relative position of $x$ and $y$ in each of the $d$ linear extensions
of the realizer. This application motivates the
following presumably more efficient encoding of posets proposed by Ne\v{s}et\v{r}il and Pudl\'ak \cite{NP89}, following the work of
Gambosi, Ne\v{s}et\v{r}il and Talamo~\cite{GNT90}.

A \emph{boolean realizer} of a poset $P$ is a set of linear orders (not necessarily linear extensions) $\set{L_1,\ldots,L_d}$ of elements of $P$
for which there exists a $d$-ary boolean formula $\phi$ such that
\[
x \leq y \Longleftrightarrow \phi((x\leq y\text{ in $L_1$}),\ldots,(x\leq y\text{ in $L_d$}))=1,
\]
for every $x,y \in P$. 
The \emph{boolean dimension} of $P$, denoted by $\bdim{P}$, is a
minimum size of a boolean realizer.
Clearly, for every
poset $P$ we have
\[
\bdim{P} \leq \dim(P).
\]

The usual dimension of a poset on $n$ elements may be linear in $n$.
The so-called, \emph{standard example} $S_n$, for $n\geq2$, is a poset
on $2n$ elements $a_1,\ldots,a_n, b_1,\ldots,b_n$ with $a_i < b_j$
in $S_n$ if and only if $i\neq j$, and with no other comparabilities
(see Figure~\ref{fig:stand5}).  On the other hand, in their seminal
paper Dushnik and Miller~\cite{DM41} observed that $\dim(S_n) = n$.  
It is a nice little exercise to show that $\bdim{S_n}\leq4$ for every $n$.
In general, Ne\v{s}et\v{r}il and Pudl\'{a}k~\cite{NP89}
showed that boolean dimension of posets on $n$ elements is
$O(\log{n})$. They also provide an easy counting argument showing that
there are posets on $n$ elements with boolean dimension at least
$c\log n$, where $c$ is some positive constant.

%%%%%%%%%%%%%%%%%%%%%%%%%%%%%%%%%%%%%%
\begin{figure}
\centering
\includegraphics[scale=.3]{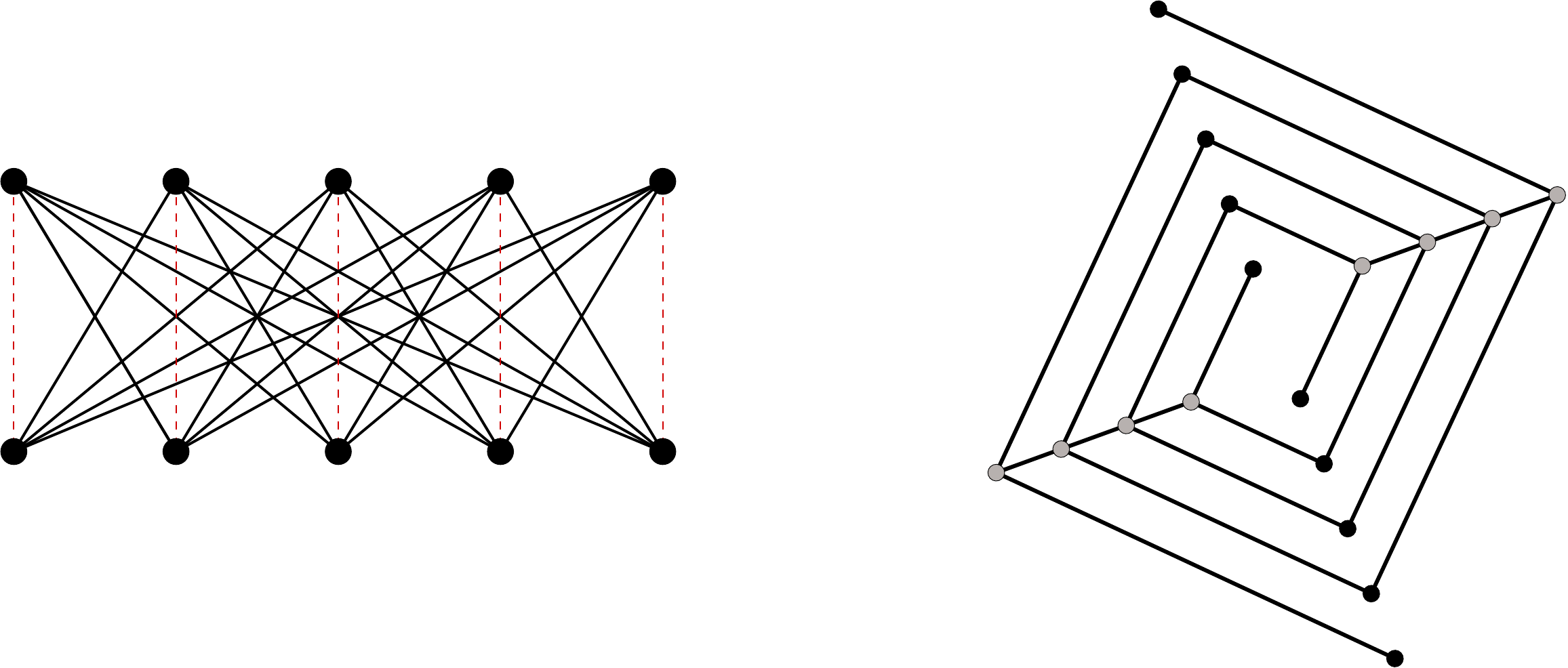}
\caption{\label{fig:stand5}The standard example $S_5$ (left). Kelly's
  planar poset containing an induced $S_5$ (right).}
\end{figure}
%%%%%%%%%%%%%%%%%%%%%%%%%%%%%%%%%%%%%%

The \emph{cover graph} of a poset $P$ is the graph on the
elements of $P$ with edge set $\set{xy\mid x<y \text{ in $P$ and there
    is no $z$ with $x<z<y$ in $P$}}$.  A poset is \emph{planar} if it
has a planar diagram, i.e., its cover graph has a non-crossing
upward drawing in the plane. This means that every edge $xy$ with
$x<y$ is drawn as a curve that goes monotonically up from the point of
$x$ to the point of $y$.  Somewhat unexpectedly, planar posets have
arbitrarily large dimension.  Kelly~\cite{Kel81} gave a construction
that embeds a standard example as a subposet into a planar poset, see
Figure~\ref{fig:stand5}.  Another property of Kelly's construction is
that the cover graphs of resulting posets have tree-width (and even
path-width) at most $3$.  Still, the boolean dimension of standard
examples and Kelly's construction is at most $4$.  There is a
beautiful open problem posed in~\cite{NP89} that remains a challenge
with essentially no progress over the years.
\begin{problem}[Ne\v{s}et\v{r}il and Pudl\'{a}k (1989)]
	\label{problem:planar}
	Is the boolean dimension of planar posets bounded?
\end{problem}

Ne\v{s}et\v{r}il and Pudl\'{a}k suggested an approach for the negative resolution of this problem that involves an auxiliary Ramsey-type problem for planar posets.
From the positive side, Brightwell and Franciosa~\cite{BF96} in 1996 proved that spherical posets with
a least element have bounded boolean dimension, contrary to ordinary dimension.  
More recently, there was a considerable effort to understand the general problem 
whether posets with stucturally restricted cover graphs have bounded boolean dimension.
M\'{e}sz\'{a}ros, Micek and Trotter~\cite{MMT} proved that boolean dimension of a poset 
is bounded in terms of boolean dimension of its $2$-connected blocks.
Micek and Walczak~\cite{MW-pw} proved that 
posets with cover graphs of bounded path-width have bounded boolean dimension.

The contribution of our paper is the following result.

\begin{theorem}\label{thm:main}
  Posets with cover graphs of bounded tree-width have bounded boolean
  dimension.
\end{theorem}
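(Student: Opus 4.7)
The plan is to follow the spirit of the Micek--Walczak path-width argument, but to overcome the fact that a tree decomposition is no longer linearly ordered. Fix $w$ and a rooted tree decomposition $(T,\{B_t\}_{t\in V(T)})$ of the cover graph with $\norm{B_t}\le w+1$. For an element $x\in P$, write $t(x)$ for the node of $T$ closest to the root whose bag contains $x$, so the set $T_x=\{t:x\in B_t\}$ is a connected subtree rooted at $t(x)$.

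The structural engine of the proof is the following. Suppose $x\le y$ in $P$ and pick any covering path $x=z_0<z_1<\dots<z_k=y$. Since consecutive cover edges share a bag, the sequence $t(z_0),t(z_1),\dots,t(z_k)$ is a closed walk on $T$, and so it has a unique topmost vertex $\tau=\tau(x,y)$ which is reached at some index $i$; the corresponding element $z=z_i$ then lies in $B_\tau$ and satisfies both $x\le z\le y$ in $P$. Thus to decide $x\le y$ it suffices to (a) identify $\tau$, (b) guess the \emph{peak element} $z$ among the $\le w+1$ candidates of $B_\tau$, and (c) verify $x\le z$ and $z\le y$ recursively in the strictly smaller subposets sitting below $\tau$ in the tree decomposition. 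Step (b) is a bounded branching, so the whole argument will be driven by constantly many permutations if we can implement (a) and (c) with constantly many permutations.

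For step (c), the key observation is that once $\tau$ is fixed, the sub-cover-paths witnessing $x\le z$ and $z\le y$ live in the subtree $T|_\tau$ of the tree decomposition rooted at $\tau$, and in fact each stays on one side of $\tau$ (i.e.\ within the union of subtrees hanging off one child of $\tau$, together with $B_\tau$ itself). This yields a clean structural recursion, and a straightforward product/union construction on permutations then lets us handle both sides simultaneously: essentially we will carry, for each element $x$, a bounded-size vector of ``markers'' (one per role the element may play: $x$, $y$, or a peak candidate) and combine the resulting permutations with a bounded-depth boolean formula.

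The main obstacle is step (a): pinning down the peak node $\tau$ using only a constant number of permutations, independent of the depth of $T$. A single linear order on $V(T)$ cannot reveal the highest ancestor that is ``touched'' by a cover walk between two arbitrary vertices, so we cannot naively reduce to the path-width result. The fix I would pursue uses the fact that graphs of tree-width $w$ admit, for every radius $r$, vertex orderings of bounded weak $r$-coloring number; applying this with $r$ tuned to the tree decomposition gives a linear ordering $\sigma$ of $V(P)$ in which for any $x\le y$ the elements of the covering path have a small set of ``reachable'' predecessors, and the peak element $z$ is characterized as the $\sigma$-minimum of that small set. A constant number of permutations, obtained by composing $\sigma$ with bag-local orderings and with the ``markers'' described above, should then suffice, while avoiding the logarithmic blow-up that would come from a heavy-path style decomposition. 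Showing that this characterization is expressible by a single boolean formula applied to a bounded number of permutations is the technical heart of the argument.
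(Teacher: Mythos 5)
Your structural observation in the first paragraph --- that for $x\le y$ every covering chain must cross the node $m_{xy}=\root{x}\wedge\root{y}$ of the rooted tree-decomposition, giving a peak element $z\in B_{m_{xy}}$ with $x\le z\le y$ and hence at most $w+1$ candidates --- is exactly the starting point of the paper's Lemma~\ref{lem:two-seq}. But the rest of the proposal does not close the gap, and the two places where you wave your hands are precisely where all the work sits.

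The recursion in step (c) is fatal. You propose to ``verify $x\le z$ and $z\le y$ recursively in the strictly smaller subposets sitting below $\tau$,'' but a boolean realizer is a \emph{fixed} collection of permutations with a \emph{fixed} formula, evaluated once per query; the recursion depth you describe is the depth of $T$, which is unbounded, and each level would cost fresh permutations, so the total would grow with $n$. The paper's way around this is the ingredient your plan is missing: it records, for each tree node $t$ on $[m_{xy},\root x]$, the comparability vector $\ve(z,t)$ of $z$ against the $\le k+1$ representatives of colors in $B_t$ (at most $5^{k+1}$ possible values), shows that along the path these vectors evolve deterministically (Lemma~\ref{lem:dag}), and equips the induced DAG with a vertex coloring under which colors are non-increasing along any directed path. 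Hence the entire walk from $m_{xy}$ to $\root x$ has a \emph{signature} of length at most $5^{k+1}$, and for each possible signature $\Gamma$ there is a dedicated constant-size branching program $\calB_\Gamma$ (Lemma~\ref{lem:mega-lemma}) built from the Color Detection tool on carefully constructed disjoint families of subtrees. That bounded-signature phenomenon is what replaces your unbounded recursion, and it is the heart of the proof, not a ``straightforward product/union construction.''

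The weak $r$-coloring idea for step (a) also does not do what you want. Locating $\tau=m_{xy}$ is actually cheap (two DFS permutations of $V(T)$ suffice, and the paper folds this into the Color Detection tool); the genuine difficulty is certifying $x\le z$ for a candidate $z\in B_{m_{xy}}$, which involves covering chains of arbitrary length. Weak $r$-coloring numbers only control bounded-radius reachability and $\mathrm{wcol}_r$ grows with $r$, so you cannot fix $r$ in advance, and taking $r$ comparable to the height of $T$ or of $P$ reintroduces exactly the dependence on $n$ that bounded boolean dimension forbids. In short, the proposal correctly isolates the structural skeleton but supplies no mechanism that keeps the number of permutations constant; the signature/DAG machinery of the paper is not an optimization detail but the essential idea.
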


The usual dimension is known to be at most $3$ for posets with cover graphs being forests (Trotter, Moore~\cite{TM77}) and at most $12$ for posets with cover graphs of tree-width $2$ (Seweryn~\cite{Sew}).
As mentioned before, Kelly's examples have tree-width $3$ and arbitrarily large dimension.
This certifies that boolean realizers are able to represent natural classes of posets that are out of reach in the default setting.

It is tempting to ask whether a result similar to Theorem~\ref{thm:main} holds for broader classes of sparse posets.
Besides planar posets, it might be true even for posets whose cover graphs exclude a fixed graph as a minor. However, we note that excluding a fixed graph as a topological minor is insufficient. Indeed, there are posets whose cover graphs have maximum degree at most $3$ and that have unbounded boolean dimension. For completeness we include such an example in Section \ref{sec:topminor}.

In 2016, Ueckerdt~\cite{Uec16} proposed yet another variant of poset's dimension, the \emph{local dimension}.
%Trotter and Walczak~\cite{TW17} studied the
%interplay between these parameters.
%They propose constructions of families of
%posets where one of the parameters stays bounded while the other 
%goes to infinity.
Interestingly, local dimension of posets with bounded path-width is bounded, while 
it becomes unbounded for posets of tree-width $3$ (Barrera-Cruz et al.~\cite{BPSTT}).
It is also known that planar posets have unbounded local dimension 
(Bosek, Grytczuk and Trotter~\cite{BGT}).

This paper is organized as follows. In Section~\ref{sec:preliminaries}, we proceed with the necessary definitions.
In particular, we introduce branching programs which we will use to build our formulas in the boolean realizers. 
We present two simple but important subroutines that we use later on extensively.
In Section~\ref{sec:families}, we set up the proof of Theorem~\ref{thm:main} and build some auxiliary structures and colorings based on the poset given on the input and on the tree-decomposition of its cover graph.
In Section~\ref{sec:main-program}, we prove Theorem~\ref{thm:main}, while
in Section~\ref{sec:reachability}, we present a connection of boolean dimension to labeling schemes for reachability queries.
In particular, we discuss how a positive resolution of Problem~\ref{problem:planar} would imply the existence of a labeling scheme of size $O(\log n)$ for reachability queries for planar digraphs. Finally, in Section~\ref{sec:topminor}, we discuss an example of posets whose cover graphs have maximum degree at most $3$ and that have unbounded boolean dimension

%%%%%%%%%%%%%%%%%%%%%%%%%%%%%%%%%%%%%%%%%%%%%%%%%%%%%%%%%%%%%%%%%%%%%%%%
\section{Preliminaries}\label{sec:preliminaries}

%%%%%%%%%%%%%%%%%%%%%%%%%%%%%%%%%%%%%%%%%%%%%%%%%%%%%%%%%%%%%%%%%%%%%%%%
\subsection{Tree-decompositions}

Let $G$ be a graph. 
A \emph{tree-decomposition} of $G$ is a pair $(T, \{B_t\}_{t\in V(T)})$ where $T$ is a tree and $\{B_t\}_{t\in V(T)}$ is a family of subsets of $V(G)$ satisfying:
\begin{enumerate}
\item for each $v\in V(G)$ there exists $t\in V(T)$ with $v\in B_t$;
\item for every edge $uv\in E(G)$ there exists $t\in V(T)$ with $u,v\in B_t$;
\item\label{twprop3} for each $v\in V(G)$, if $v\in B_t \cap B_{t''}$ for some $t,t''\in V(T)$, and $t'$ lies on the path in $T$ between $t$ and $t''$, then $v\in B_{t'}$.
\end{enumerate}

By Property \ref{twprop3}, we have that for every vertex $v\in V(G)$, the vertices $t\in V(T)$ for which $v\in B_t$ form a subtree of $T$, called the \emph{subtree of $v$}.

The quality of a tree-decomposition $(T, \{B_t\}_{t\in V(T)})$ is usually measured by its
width, i.e. the maximum of $|B_t|-1$ over all $t\in V(T)$. 
Then the \emph{tree-width} $\text{tw}(G)$ of G is the
minimum width of a tree-decomposition of G.

\subsection{Branching programs}

To show that a poset $P$ has $\bdim P\leq d$, one should provide~$d$
linear orders $\pi_1,\dots,\pi_d$ of the elements of $P$ and a boolean
formula $\phi(\xi_1,\dots,\xi_d)$. It will be convenient to describe
the formula $\phi$ as a branching program.

We think of a branching program $\calB$ as a rooted tree.
The nodes of the tree are subprograms.
The task of a subprogram at node $N$ is to return a boolean value 
to its parent, this boolean value is called the \emph{evaluation} of $N$.
%The actions taken by the subprogram depend on the values of some
%of the input variables $\xi_1,\dots,\xi_d$ of $\calB$.
%On the basis of these values the program at $N$ may call 
%some of its children for evaluation and ignore some other children.
The evaluation of $N$ is a function of the 
values returned from the children and of the input variables $\xi_1,\dots,\xi_d$ of $\calB$.
The evaluation $\eval {\calB}$ of the branching program $\calB
(b_1,\dots,b_d)$ with inputs $b_1,\ldots,b_d$ is the evaluation of its root node. A branching program $\calB$ is said to represent a boolean formula
$\phi(\xi_1,\dots,\xi_d)$ if for all inputs $b_1,\dots,b_d$, we have
$\eval {\calB}=\phi(b_1,\dots,b_d)$. 
%Our informal description of a branching program 
%is not emphasizing the existence of a corresponding 
%boolean formula, however, such a formula exists when the 
%evaluation of each node $N$ is only depending on the 
%input variables and the evaluations of the children.

Therefore we can prove $\bdim P\leq d$ by describing a
branching program that answers queries `$(x<y)?$` using variables
$\xi_1,\dots,\xi_d$, where $\xi_i=1$ if and only if $x<y$ in $\pi_i$.
In this case, we say that a branching program \emph{depends} on $\pi_1,\dots,\pi_d$.

%%%%%%%%%%%%%%%%%%%%%%%%%%%%%%%%%%%%%%%%%%%%%%%%%%%%%%%%%%%%%%%%%%%%%%%%
\subsection{Tools}

The first tool is a straightforward branching program able to detect if a queried element lies within some fixed set.
For a linear order $\pi$ on a set $V$ let $\pi^*$ denote the reversal of $\pi$ and for $X\subset V$, let $\pi(X)$ denote the projection of $\pi$ onto $X$.
We also work with linear orders as sequences, i.e., when $A$ is disjoint from $B$, $\pi$ is a linear order on $A$ and $\sigma$ is a linear order on $B$, then $\pi\sigma$ is a concatenation of these linear orders which is a linear order on $A \cup B$.

\begin{tool}[Set Membership]\label{cor:set-memb}
Let $V$ be a set and let $C\subseteq V$. 
Then there are three linear orders on $V$ such
that for every distinct $x,y\in V$ by looking at the order of $x$ and~$y$ in
these three linear orders one can decide whether each $x$ and $y$ belong
to $C$ or not.
\end{tool}
\begin{proof}
Fix an arbitrary linear order $\pi$ on $V$ and consider the following three linear orders: $\pi_1=\pi(C)\pi(V-C)$, $\pi_2=\pi^*(C)\pi(V-C)$ and $\pi_3=\pi(C)\pi^*(V-C)$ do the job. Then $x\in C$ if and only if $x$ and $y$ switch orders in $\pi_1$ and $\pi_2$ or in all three linear orders $x<y$. Similarly, $y\in C$ if and only if $x$ and $y$ switch orders in $\pi_1$ and $\pi_2$ or in all three linear orders $y<x$.
\end{proof}

The other tool deals with elements lying in a tree.
Let $T$  be a rooted tree with some of its edges being colored either with \red or with \green. 
For a vertex $u\in V(T)$, we denote the subtree of $T$ rooted at $u$ by $T_u$. If $v\in T_u$, then we say that $u$ is \emph{below} $v$ in $T$ or $v$ is \emph{above} $u$ in $T$. Note that, in particular, $u$ is below itself.
For $u,v\in V(T)$ we denote the unique path between $u$ and $v$ in $T$ by $[u,v]$, 
and the vertex in $[u,v]$ that is closest to the root of $T$ by $u\wedge v$.  
We call $u\wedge v$ the \emph{meet} of $u$ and $v$. 
We also assume that $T$ is given together with a planar upward drawing with lowest vertex being the root. 
This implies that at every vertex $u$ there is a left-to-right ordering of the subtrees rooted at the children of $u$. 
Now suppose that the vertices $u$ and $v$ are such that none of them is below the other. 
Then $u\wedge v$ has two children $u'\neq v'$ such that $u\in T_{u'}$ and $v\in T_{v'}$. 
We say that $u$ \emph{is  left of} $v$ in $T$ if $T_{u'}$ comes before $T_{v'}$ in the left-to-right ordering of the subtrees rooted at the children of $u\wedge v$. For example, in Figure~\ref{fig:red-green tree} we have that $c$ is below $r$, the meet of $f$ and $s$ is $a$, and $k$ is left of $n$.

\begin{figure}[!h]
	\begin{center}
		\begin{tikzpicture}[scale=.73,every node/.style={circle,draw,color=black,fill=black,inner sep=0pt,minimum width=4pt}]
			\begin{scope}[shift={(0,-2)}]
	      		\node (a) at (0,0) [label=below:$a$]{};
	
	      		\node (b) at (-7.5,2) [label=below:$b$]{};
	      		\node (c) at (-2.5,2) [label=below:$c$]{};
	      		\node (d) at (2.5,2) [label=below:$d$]{};
	      		\node (e) at (7.5,2) [label=below:$e$]{};

	      		\node (f) at (-9.5,4) [label=left:$f$]{};
	      		\node (g) at (-7.5,4) [label=left:$g$]{};
	      		\node (h) at (-5.5,4) [label=right:$h$]{};
	      		
	      		\node (i) at (-3.5,4) [label=left:$i$]{};
	      		\node (j) at (-1.5,4) [label=right:$j$]{};
	      		
	      		\node (k) at (0.5,4) [label=left:$k$]{};
	      		\node (l) at (2.5,4) [label=left:$l$]{};
	      		\node (m) at (4.5,4) [label=right:$m$]{};
	      		
	      		\node (n) at (6.5,4) [label=left:$n$]{};
	      		\node (o) at (8.5,4) [label=right:$o$]{};
	      		
	      		\node (p) at (-7.5,6) [label=left:$p$]{};
	      		
	      		\node (q) at (-3.5,6) [label=left:$q$]{};
	      		
	      		\node (r) at (-2.5,6) [label=left:$r$]{};
	      		\node (s) at (-0.5,6) [label=right:$s$]{};
	      		
	      		\node (t) at (2.5,6) [label=left:$t$]{};
	
	      		\draw[very thick] (a) -- (b);
	      		\draw[red,ultra thick] (a) -- (c);
	      		\draw[green,ultra thick] (a) -- (d);
	      		\draw[very thick] (a) -- (e);
	
	     	 	     \draw[very thick] (b) -- (f);
	      		\draw[red,ultra thick] (b) -- (g);
	      		\draw[very thick] (b) -- (h);
	      		
	      		\draw[very thick] (c) -- (i);
	      		\draw[green,ultra thick] (c) -- (j);
	      		
	      		\draw[very thick] (d) -- (k);
	      		\draw[red,ultra thick] (d) -- (l);
	      		\draw[very thick] (d) -- (m);
	      		
	      		\draw[red,ultra thick] (e) -- (n);
	      		\draw[very thick] (e) -- (o);
	      		
	      		\draw[green,ultra thick] (g) -- (p);
	      		
	      		\draw[very thick] (i) -- (q);
	      		
	      		\draw[red,ultra thick] (j) -- (r);
	      		\draw[very thick] (j) -- (s);
	      		
	      		\draw[very thick] (l) -- (t);
      		\end{scope}
    	\end{tikzpicture}
	\end{center}
	\label{fig:red-green tree}
	\caption{A \red\ - \green-colored tree. With this tree at the input, Algorithm 1, when using the topological order that is represented by the alphabetical labeling of the vertices, returns the linear order $$c~i~q~r~j~s~g~p~n~a~b~e~f~h~o~l~t~d~k~m,$$ while Algorithm 2 returns the linear order $$a~b~f~h~d~k~m~l~t~e~o~n~c~i~q~j~s~r~g~p.$$}
\end{figure}

\begin{tool}[Color Detection]\label{prop:mega-tool}
Let $T$ be a rooted tree with some of its edges being colored either with \red or with \green. 
Then there is a branching program $\mathcal{B}$ depending on five linear orders on $V(T)$ such that 
for every queried $x,y\in V(T)$ with $x\wedge y$ being strictly below $x$ (resp.~$y$) in $T$, $\calB$ outputs $1$ if and only if 
the first colored edge on $[x\wedge y,x]$ (resp.~$[x\wedge y,y]$) is \red.
\end{tool}
\begin{proof}
The two setups are clearly symmetric, so in the proof, we concentrate only on the case when the queried vertices $x,y$ are such that $x\wedge y$ is strictly below $x$.

The root node $N_{\text{root}}$ of $\mathcal{B}$ establishes first the relative position of $x$ and $y$ in $T$.
This can be done with two linearorders on $V(T)$: the left-to-right depth first search ordering $\pi_L$ of $V(T)$ and the right-to-left depth-first-search ordering $\pi_R$ of $V(T)$. 
By the assumption $x\wedge y$ being strictly below $x$ in $T$, there are three possible outcomes:
\begin{itemize}
\item[--]  $y$ is strictly below $x$ in $T$ $\iff$ $y<x$ in both $\pi_L$ and $\pi_R$;
\item[--]  $x$ is left of $y$ in $T$ $\iff$ $x< y$ in $\pi_L$ and $y<x$ in $\pi_R$;
\item[--]  $y$ is left of $x$ in $T$ $\iff$ $y< x$ in $\pi_L$ and $x<y$ in $\pi_R$. 
\end{itemize}
$N_{\text{root}}$ has three children $N_{y\text{ below }x}$, $N_{x\text{ left of }y}$ and $N_{y\text{ left of }x}$, 
and it simply outputs the evaluation of the child responsible for the detected case of the relative position of $x$ and $y$ in $T$.
We will show that each child needs only one extra linear order to output the correct value.
We start with $N_{y\text{ below }x}$.

\begin{claim}
Suppose $y$ is strictly below $x$ and let $\pi$ be the linear order on $V(T)$ produced by Algorithm~\ref{Detection-Algo:y-below-x}.
Then $x<y$ in $\pi$ if and only if the first colored edge on the path $[y,x]$ in $T$ is \red.
\end{claim}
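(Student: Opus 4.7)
My plan is to prove the claim by induction on $|V(T)|$, after unpacking the recursive three-block structure that I expect Algorithm~\ref{Detection-Algo:y-below-x} to produce. For each vertex $v\in V(T)$, partition the subtree $T_v$ into three sets: $A_v$ consisting of those $u\in T_v\setminus\{v\}$ for which the first colored edge on $[v,u]$ is \red, $B_v$ for those where it is \green, and $M_v$ consisting of $v$ together with all descendants of $v$ reachable from $v$ by uncolored edges only. I expect $\pi_v:=\pi[T_v]$ to have the three-block shape
\[
\pi_v \;=\; A_v \,\cdot\, M_v \,\cdot\, B_v,
\]
where $M_v$ is listed in the pre-order DFS of the uncolored subtree rooted at $v$ (so that ancestors precede descendants inside $M_v$), $A_v$ is the concatenation, over $w\in M_v$ in that pre-order and over red children $c$ of $w$ in the left-to-right planar order, of the recursive blocks $\pi_c$, and $B_v$ is analogous for green children. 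Globally $\pi=\pi_r$ for the root $r$ of $T$.

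Fix a query with $y$ strictly below $x$ in $T$, so $y$ is an ancestor of $x$. The outer case distinction is by the location of $y$ in $\pi_r$. If $y\in A_r$, then $y$ belongs to some recursive block $\pi_c$ with $c$ a red child of some vertex of $M_r$; because the path $[y,x]$ stays inside $T_c$, also $x\in\pi_c$, and the relative order of $x$ and $y$ in $\pi$ matches their relative order in the contiguous sub-block $\pi_c$. The subtree $T_c$ is strictly smaller than $T$, so the induction hypothesis applied to $T_c$ yields the desired equivalence here; the case $y\in B_r$ is symmetric.

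The remaining case is $y\in M_r$, which is where the three-block layout pays off. If the first colored edge on $[y,x]$ is \red, then $x\in A_r$, and since $A_r$ precedes $M_r$ in $\pi$ we obtain $x<y$ in $\pi$. If it is \green, then $x\in B_r$ and we get $y<x$. Finally, if $[y,x]$ contains no colored edge, then $x\in M_r$ as well, and the DFS pre-order of $M_r$ places the ancestor $y$ before $x$, again giving $y<x$; this matches the statement since the equivalence ``$x<y$ iff the first colored edge is \red'' vacuously prescribes $y<x$. In all sub-cases the claim holds.

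The only non-routine step, and the main potential obstacle, is confirming that the three-block decomposition of $\pi_v$ is well-defined: every $u\in A_v$ truly belongs to a unique recursive block $\pi_c$, where $c$ is the red child of the unique $w\in M_v$ incident to the first colored edge on $[v,u]$ (and symmetrically for $B_v$). This reduces to the observation that $M_v$ is precisely the connected component of $v$ in the uncolored-edge subgraph of $T_v$, together with a short check that the subtrees $T_c$ picked up by distinct entry edges are pairwise disjoint. Once this bookkeeping is in place, the induction closes with no further work, so I expect the overall argument to be short.
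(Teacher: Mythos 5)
Your proof is correct, and the underlying insight is the same as the paper's: the position of $y$ in $\pi$ is governed by the uncolored component containing $y$ and by the color of the first colored edge encountered on the way from that component to $x$. The difference is purely organizational. The paper argues directly and non-inductively: it records two observations about the execution of the algorithm (every vertex lands in $C(v)$ for exactly one $v$, and each call of $\mathrm{process}(v)$ appends all of $T_v$ contiguously), then reads off the comparison of $x$ and $y$ by tracking the call $\mathrm{process}(z_y)$ responsible for appending $y$, which makes your inductive descent unnecessary. You instead formalize the block structure $\pi_v = A_v\cdot M_v\cdot B_v$ and induct on $|V(T)|$, peeling off a red or green block to get into a smaller subtree until $y$ falls into the central uncolored block, which is exactly where the paper's $z_y$-based argument starts. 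Both are sound; the paper's version avoids the need to verify that the recursive block $\pi_c$ equals the permutation produced by running the algorithm on $T_c$ alone (true, since $\mathrm{process}$ is self-contained, but a step you implicitly skip), at the cost of needing the well-definedness observation you are proving inductively. Two tiny points of phrasing: the algorithm only guarantees a topological order of $C(v)$, not specifically DFS pre-order, but as you note in your parenthetical only ``ancestors before descendants'' is used; and in the case of no colored edge on $[y,x]$, the right-hand side of the equivalence is simply false rather than ``vacuously'' anything, so the claim correctly forces $y<x$.
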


%%%%%%%%%%%%%%%%%%%%%%%%%%%%%%%%%%%%%%%%%%%%%%%%%%%%%%%%%
\begin{algorithm-hbox}[!ht]
	\caption{Color detection for $y$ being below $x$ in $T$}\label{Detection-Algo:y-below-x}
	\Proc{process($v$)}{
		$C(v)=\{u\in T_v\mid$ the path $[v,u]$ has no colored edges$\}$\; 
		$R(v)=\{u\in T_v\mid$ the path $[v,u]$ has a unique colored edge,\\
		\hskip32mm this edge is the last edge and it is \red$\!\!\}$\; 
		$G(v)=\{u\in T_v\mid$ the path $[v,u]$ has a unique colored edge,\\ 
		\hskip32mm this edge is the last edge and it is \green$\!\!\}$\;
		\vspace{5pt} 
		\For{$u\in R(v)$}{ process($u$) } \vspace{5pt}
		$L=$ list of all vertices in $C(v)$ in a topological order in $T$\;
		append $L$ to $\pi$\; \vspace{5pt} 
		\For{$u\in G(v)$}{ process($u$) } }
	$\pi=$\ empty\;
	\textrm{process}($\root T$)\;
	return $\pi$\;
\end{algorithm-hbox}
%%%%%%%%%%%%%%%%%%%%%%%%%%%%%%%%%%%%%%%%%%%%%%%%%%%%%%%%%

\begin{proof}[Proof of the Claim] 
We start with two simple observations.
\begin{itemize}
\item[--] During the execution of process($\root T$), every vertex of $T$ is a member of $C(v)$ for a unique $v\in V(T)$.
\item[--] A call of process($v$) appends all elements of $T_v$ to $\pi$ before returning.
\end{itemize}
Now consider $x,y$ such that $y$ is strictly below $x$ in $T$, and let $z_y$ be the node of $T$ for which $y\in C(z_y)$ during the algorithm. Also let $L_y$ be the list of all vertices in $C(z_y)$ in topological order. We distinguish three cases.
\begin{enumerate}
\item \emph{The path $[y,x]$ has no colored edges.} In this case $x\in C(z_y)$, and in $L_y$ we have $y$ before $x$. Therefore $y<x$ in $\pi$.
\item \emph{The first colored edge $(v,w)$ on the path $[y,x]$ is \green.} In this case $w\in G(z_y)$, hence the process($z_y$) appends the list $L_y$ containing $y$ before calling process($w$). However, as process($w$) is the one which puts $x$ in the linear order, we again have $y<x$ in $\pi$.
\item \emph{The first colored edge $(v,w)$ on the path $[y,x]$ is \red.} In this case $w\in R(z_y)$ and hence the process($z_y$) appends call process($w$), and puts $x$ in $\pi$, before appending the list $L_y$ containing $y$. Now we have $x<y$ in $\pi$.
\end{enumerate} 
\end{proof}

We move on to  $N_{x\text{ left of }y}$ .

\begin{claim}
 Suppose $x$ is left of $y$ and let $\pi$ be the linear order on $V(T)$ produced by Algorithm~\ref{Detection-Algo:x-left-of-y}.  Then $y<x$ in $\pi$ if and only if the first colored edge on the path $[x\wedge y,x]$ in $T$ is \red.
\end{claim}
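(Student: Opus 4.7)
The plan is to describe an algorithm analogous to Algorithm~\ref{Detection-Algo:y-below-x} that builds $\pi$ by a recursive top-down traversal of $T$, using the colour of the child-edges to decide how to arrange the subtrees. At a node $v$ with children $u_1,\ldots,u_k$ in the planar left-to-right order, I would first recurse to obtain permutations $\pi_{u_i}$ of the subtrees $T_{u_i}$. Classify each vertex of $T_v\setminus\{v\}$ according to the colour of the first coloured edge on the descending path from $v$, into $R_v$, $G_v$, $N_v$ for \red, \green, or no coloured edge respectively. Then set
\[
\pi_v \;=\; v \cdot F_1 F_2 \cdots F_k \cdot B_k B_{k-1} \cdots B_1,
\]
where $F_i$ is the restriction of $\pi_{u_i}$ to $(G_v\cup N_v)\cap T_{u_i}$ and $B_i$ its restriction to $R_v\cap T_{u_i}$, each inheriting its order from $\pi_{u_i}$. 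The crucial trick is the reversal of the $B$-blocks, which forces every $R_v$-vertex sitting in $T_{u_i}$ to land after every vertex of the later subtrees $T_{u_{i+1}},\ldots,T_{u_k}$.

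Next I would prove by induction on $|T_v|$ two simultaneous invariants for $\pi_v$: \textbf{(a)} every vertex of $(G_v\cup N_v)\cap T_v$ precedes every vertex of $R_v$ in $\pi_v$; and \textbf{(b)} for any $x,y\in T_v$ with $x$ left of $y$ in $T$ and $x\wedge y$ strictly below $x$, one has $y<_{\pi_v}x$ iff the first coloured edge on $[x\wedge y,x]$ is \red. Property (a) is immediate from the concatenation pattern. For (b), split on whether $x$ and $y$ lie in the same subtree $T_{u_i}$ or in distinct ones $T_{u_i},T_{u_j}$ with $i<j$. The same-subtree case follows by induction, provided that the internal order of $T_{u_i}$ inside $\pi_v$ matches that of $\pi_{u_i}$; this is where invariant (a) at level $u_i$ enters, because when the edge $(v,u_i)$ is uncoloured, partitioning $\pi_{u_i}$ into $F_i$ and $B_i$ simply cuts it along its prefix (the non-$R_{u_i}$ part) and its suffix (the $R_{u_i}$ part). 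In the different-subtree case $x\wedge y=v$, and a short case analysis on the colour of $(v,u_i)$ shows that $x$ lands in $B_i$ exactly when the first coloured edge on $[v,x]$ is \red; since every $y$ in a later subtree lies in some block with index at least $i+1$, the reversal of the $B$-blocks yields the required equivalence.

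The main subtlety, as I expect, is the coupling of the two invariants. Invariant (b) alone would not let us glue the sub-permutations together without disturbing the internal order of some subtree; we need (a) at the children to guarantee that the $R_{u_i}$-part of $\pi_{u_i}$ and its complement are separated into a suffix and a prefix, so that the reshuffling prescribed by the construction behaves well. The induction must therefore carry (a) and (b) in parallel, with special care for the boundary cases where $(v,u_i)$ is itself coloured: then one of $F_i,B_i$ is empty and $\pi_{u_i}$ slots entirely into one half of $\pi_v$, respecting (a) trivially.
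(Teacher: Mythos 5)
Your construction is correct, but it replaces the paper's Algorithm~\ref{Detection-Algo:x-left-of-y} rather than analyzing it. The paper builds $\pi$ operationally with a stack-based depth-first traversal: at each node it processes uncolored-edge children inline, defers \red-edge children onto a global stack, and handles \green-edge children via a local stack delimited by a marker; the claim is then verified by a small set of observations about what must lie on the stack when a given \texttt{process} call fires. You instead give a declarative bottom-up merge $\pi_v = v\,F_1\cdots F_k\,B_k\cdots B_1$, classifying descendants by the color of the first colored edge on the downward path from $v$, and the key observations — (a) that the non-\red part always forms a prefix and the \red part a suffix, which is exactly what lets the cut $F_i/B_i$ respect the internal order of $\pi_{u_i}$ when $(v,u_i)$ is uncolored, and (b) the target property — are carried as coupled inductive invariants. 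This is arguably cleaner: the paper's observations (e.g.\ that items on the stack during a call always end up later in $\pi$, or that a \green edge's local stack yields a consecutive block for $T_w$) are asserted rather than proved by an explicit induction, whereas your two invariants make the recursion self-contained. You are also right that invariant (a) is the crucial enabler and not an afterthought — without it the same-subtree case of (b) cannot be inherited from the child. In both routes the permutation is not unique to the task, and (as one can check on small examples) your construction appears to produce the same $\pi$ as the paper's stack algorithm, though this is neither claimed nor needed; what matters for Tool~\ref{prop:mega-tool} is only the existence of a permutation with the stated property, which you establish.
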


\begin{algorithm-hbox}[!ht]
  \caption{Color detection for $x$ being left of $y$ in
    $T$}\label{Detection-Algo:x-left-of-y}
\Proc{process($v$)}{
append $v$ to $\pi$\;
\For{each $w$ child of $v$ in $T$ taken in left-to-right-order}
{
\If{$(v,w)$ is uncolored}
{process($w$)}
\If{$(v,w)$ is \red}
{S.push($w$)}
\If{$(v,w)$ is \green}
{S.push($v$)
\qquad\qquad\qquad\textit{// marking the beginning of the local stack of $v$}\;
process($w$)\;
\While{S.top()\ $\neq v$}
{process(S.pop())}
S.pop()\quad\qquad\qquad\qquad\textit{// taking off the marker}\;
}
}
}
$\pi=\emptyset$\;
S\ $= \textrm{empty stack}$\;
\textrm{process}($\root T$)\;
\While{S\emph{ is non-empty}}
{process(S.pop())}
return $\pi$\;
\end{algorithm-hbox}

\begin{proof}[Proof of the Claim]
We start the proof again with some simple observations.
\begin{itemize}
\item[--] For each vertex $v$ in $T$, there is a unique call of \textrm{process}$(v)$, and so the resulting $\pi$ is really a linear order on
$V(T)$.
\item[--] If $u,v,u',v'$ are vertices of $T$ such that $u\in T_{u'}$ and $v\in T_{v'}$ and $u'$ is on the current stack when \textrm{process}$(v')$ is called, then $v < u$ in $\pi$.
\item[--] If $[u,u']$ is a path of uncolored edges in $T$ and $v$ is a vertex that is above $u$ and right of $u'$, then $u'<v$ in $\pi$.
\item[--] If $(v,w)$ is a \green edge with $v$ below $w$, then the local stack of $v$ makes the procedure behave as if Algorithm~\ref{Detection-Algo:x-left-of-y} had been called for the tree $T_w$. In particular it appends all the vertices of $T_w$ in a consecutive block of $\pi$.
\end{itemize}
Now consider $x,y$ such that $x$ is left of $y$ in $T$. We again distinguish three
cases.
\begin{enumerate}
\item \emph{The path $[ x\wedge y,x]$ has no colored edges.} Then $x < y$ in $\pi$ follows from the third observation above.
\item \emph{The first colored edge $(v,w)$ on the path $[ x\wedge y,x]$ is \green.} From the third observation above, we obtain $v < y$ in $\pi$. By the fourth observation, the call of \textrm{process}$(w)$ appends all vertices of $T_w$, including $x$, to $\pi$ in a consecutive block before the processing of the local stack of $v$ is finished, and so before $y$ is touched. This implies that  $x < y$ in $\pi$.
\item \emph{The first colored edge $(v,w)$ on the path $[ x\wedge y,x]$ is \red.} Then $w$ is put on the stack when processing $v$ and remains on the stack until all children of $ x\wedge y$ have been processed. If $v'$ is the child of $ x\wedge y$ with $y\in T_{v'}$, then $w$ is on the stack when \textrm{process}$(v')$ is called. The second observation above shows that in this case $y < x$ in $\pi$.
\end{enumerate} 
\end{proof}	

As the cases '$x$ left of $y$' and '$y$ left of $x$' are clearly symmetric, this finishes the proof of the existence of Tool \ref{prop:mega-tool}.
\end{proof}
%%%%%%%%%%%%%%%%%%%%%%%%%%%%%%%%%%%%%%%%%%%%%%%%%%%%%%%%%%%%%%%%%%%%%%%%

%%%%%%%%%%%%%%%%%%%%%%%%%%%%%%%%%%%%%%%%%%%%%%%%%%%%%%%%%%%%%%%%%%%%%%%%

\section{The proof setup}\label{sec:families}

Let $P$ be a poset with cover graph $G$ and suppose that $\tw(G) \leq k$. Fix a tree-decomposition $(T,\{B_t\}_{t\in V(T)})$ of $G$ of width at most $k$. We imagine the tree $T$ to be rooted and being drawn upwards with lowest vertex the root.  In particular at every vertex $t$ there is a fixed left-to-right order of its children. For $z\in P$ let $\root{z}$ denote the root (i.e.\ the lowest vertex) of the subtree of $z$. Massaging a bit the tree-decomposition, we can assume that the vertices $\root{z}$, $z\in P$ are all distinct.

First, we apply a standard greedy coloring procedure to the elements of $P$: Fix any ordering of the elements of $P$ such that if $\root{z}$ is below $\root{z'}$ in $T$, then $z$ is before $z'$ in the ordering. Then, along this ordering, color an element $z\in P$ with the least possible color that does not appear in $B_{\root{z}}$. In this way, the elements from the same bag will have distinct colors, and we clearly use at most $k+1$ colors. Denote this coloring by $c:P\to[k+1]$. For a vertex $t\in V(T)$ and $i\in[k+1]$, if there is a (unique) element $z\in B_t$ of color $i$, then we call it the \emph{representative} of color $i$ at $t$ and denote it by $\rep i t$, otherwise we say that $\rep i t$ is undefined.

For a vertex $t\in V(T)$ and an element $z\in P$ such that $\root{z}$ is below $t$ in $T$, we define the vector $\ve(z,t)$ of length $k+1$ so that for $i\in [k+1]$, the $i^{\text{th}}$ coordinate is
\[
\ve_i(z,t)=
\begin{cases}
< & \text{ if } z<\rep i t \text{ in $P$},\\ 
> & \text{ if } z>\rep i t \text{ in $P$},\\
\parallel & \text{ if } z\parallel\rep i t \text{ in $P$},\\ 
= & \text{ if } z=\rep i t,\\ 
\ast &\text{ if } \rep i t\text{ is undefined}.
\end{cases}
\]
Now, for a vertex $t\in V(T)$ we define 
\[ 
\act t=\set{\ve(z,t) \mid z\in P \text{ such that }\root z \text{ is below $t$ in $T$}}.
\]
Recall that above we allow $\root z=t$. Note that in general there are at most $5^{k+1}$ such possible vectors, so in particular for every $t\in V(T)$, we have $|\act t|\leq 5^{k+1}$.

Next we define an auxiliary directed graph $D$ accompanying our fixed tree-decomposition (together with a fixed coloring of $P$), that will play a key role in
the remaining argument. The vertex set of $D$ is $\bigcup_{t\in V(T)} D_t$, where $D_t=\set{t}\times\act t$, and there is an edge from
$(t,\ve (z,t))$ to $(t',\ve (z,t'))$ for every $t,t'\in V(T)$ with $t$ being a parent of $t'$ in $T$ and every $z\in P$ with $\root z$ being below $t$ in $T$. 

\begin{lemma}\label{lem:dag}
For every edge $tt'\in E(T)$ with $t$ being the parent of $t'$ in $T$ and every $d\in D_t$, the vertex $d$ has exactly one out-neighbor in $D_{t'}$
\end{lemma}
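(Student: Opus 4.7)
The plan is to show uniqueness of the out-neighbor by proving that $\ve(z, t')$ depends only on $\vec v := \ve(z, t)$, that is, any two $z_1, z_2 \in P$ with $\root{z_1}, \root{z_2}$ below $t$ and $\ve(z_1, t) = \ve(z_2, t)$ satisfy $\ve(z_1, t') = \ve(z_2, t')$. Existence of an out-neighbor is immediate from the definitions: $\vec v \in \act t$ is of the form $\ve(z, t)$ for some $z$ with $\root z$ below $t$, and since $t$ is an ancestor of $t'$ the root $\root z$ is also below $t'$, so the definition of $D$ produces the out-edge to $(t', \ve(z, t'))$.

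For uniqueness I would argue coordinate by coordinate. Fix a color $i \in [k+1]$. If $\rep{i}{t'}$ is undefined, the $i$th coordinate is $\ast$. If $y := \rep{i}{t'}$ also lies in $B_t$, then $y$ is automatically the color-$i$ representative of $B_t$ (two elements of a common bag carry distinct colors), so $\rep{i}{t} = y$ and $\ve_i(z, t') = \ve_i(z, t)$ is read off $\vec v$. If $\vec v$ has an $=$-entry in some coordinate $\ell$, then every $z$ with $\ve(z, t) = \vec v$ is forced to equal $\rep{\ell}{t}$, so $z$ itself is determined by $\vec v$, and with it $\ve(z, t')$. This reduces everything to the single case $y := \rep{i}{t'} \in B_{t'} \setminus B_t$ and no $=$ in $\vec v$ (equivalently $z \notin B_t$), in which I still need to recover the $P$-relation between $z$ and $y$ from $\vec v$.

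The key claim is the equivalence
\[
z < y \text{ in } P \ \Longleftrightarrow\ \exists\, s \in B_t : \ z < s \text{ in } P \text{ and } s < y \text{ in } P,
\]
and symmetrically for $z > y$. The reverse direction is just transitivity. For the forward direction I apply a standard separator argument on the tree-decomposition: since $z \notin B_t$ and $\root z$ is a strict ancestor of $t$, the subtree of $z$ sits in the connected component of $T \setminus \{t\}$ lying outside $T_t$; since $y \in B_{t'} \setminus B_t$, the subtree of $y$ sits in the different component $T_{t'}$. Hence $z$ and $y$ lie in distinct components of $\cover(P) \setminus B_t$; in particular $z \ne y$, and any cover chain from $z$ to $y$ has to visit $B_t$. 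On an increasing cover chain $z = z_0 < z_1 < \cdots < z_m = y$, a visited vertex $s \in B_t$ satisfies $z < s < y$ (strict because $z, y \notin B_t$) and witnesses the right-hand side.

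With the equivalence in hand, $\ve_i(z, t')$ is an explicit function of $\vec v$: for each $s = \rep{\ell}{t} \in B_t$, the condition $z < s$ in $P$ is equivalent to the $\ell$th coordinate of $\vec v$ being $<$, while $s < y$ is determined by $P$ alone. The main obstacle is the separator step --- translating the tree-decomposition axiom into the assertion that every cover chain between $z$ and $y$ must cross $B_t$; the rest is just coordinate bookkeeping.
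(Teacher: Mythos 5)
Your proof is correct and follows essentially the same approach as the paper's: a coordinate-by-coordinate case analysis on the status of $\rep{i}{t'}$, with the interesting case (representative new at $t'$) handled by a tree-decomposition separator argument applied to a cover chain. The only cosmetic differences are that you use $B_t$ as the separating bag where the paper uses $B_t\cap B_{t'}$ (both work), and you isolate the ``$=$ in $\vec v$'' subcase explicitly where the paper rules it out implicitly by taking $x\neq y$.
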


\begin{proof}
Let $t,t'\in V(T)$ with $t$ being the parent of $t'$. To prove the lemma, we need to show that for every two distinct elements $x,y\in P$ with $\root x$ and $\root y$ being below $t$ in $T$, and $\ve (x,t)=\ve (y,t)$, we have
\[
\ve(x,t') = \ve(y,t').
\]
To do so take an arbitrary $i\in [k+1]$. We distinguish three cases.
\begin{enumerate}
\item \emph{$\rep i {t'}$ is undefined.} Then by default we have $\ve_i(x,t') = \ast =\ve_i(y,t')$.
\item \emph{$\rep i {t'}=\rep i {t}$ are both defined and they are equal.} Then $\ve_i(x,t') = \ve_i(x,t) =\ve_i(y,t) =\ve_i(y,t')$.
\item \emph{$\rep i {t'}$ is defined and $\rep i {t}$ is undefined or both of them are defined but are different.} Let $z'=\rep i {t'}$. Then $t'=\root{z'}$, in particular $z'\neq x,y$ and hence $\ve_i(x,t'),\ve_i(y,t')\in \{<,>,\parallel\}$. Suppose first that $\ve_i(x,t')=`<`$. This by definition means that $x<z'$ in $P$.  Let us consider a cover chain of this relation in $P$.  By the properties of a tree-decomposition this chain must contain an element $z$ with $x \leq z < z'$ in $P$ such that $z\in B_t\cap B_{t'}$. Then $z$ is a representative of some color $i_0\neq i$  both at $t$ and at $t'$, so by the previous case we know that $\ve_{i_0}(x,t') =\ve_{i_0}(y,t')$. This implies $y \leq z < z'$ in $P$ and so we conclude $\ve_i(y,t')= `<`$ as required.  Along the very same line, we can prove that if $\ve_i(x,t')=`>`$ then $\ve_i(y,t')=`>`$. However as $\ve_i(x,t')$ and $\ve_i(y,t')$ can only take three possible values, this already finishes the proof of this case.
\end{enumerate}

%The statement of the lemma now follows with an easy induction on the distance between $t$ and $t'$ in $T$.  Above we just proved the statement for $t$, $t'$ at distance $1$ in $T$.  For larger distances, consider the last vertex $t''$ before $t'$ on path $[t,t']$ in $T$. By the induction hypothesis given $d\in D_t$ there is a unique $d''\in D_{t''}$ such that there is a path from $d$ to $d''$ in $D$, and given $d''\in D_{t''}$ there is a unique $d'\in D_{t'}$ such that there is a path from $d''$ to $d'$ in $D$.
\end{proof}

We define a coloring $c_D: V(D)\to [5^{k+1}]$ as follows.
Let $t_0,t_1,\dots,t_m$ be the vertices of $T$ in a topological order.
Color all vertices from $D_{t_0}$ with distinct colors.
Now for $i\in [m]$ assuming that vertices in $\bigcup_{j<i} D_{t_j}$ are already colored 
we color $D_{t_i}$ as follows:
\begin{enumerate}
\item for every $d\in D_{t_i}$ such that $d$ has at least one incoming edge in $D$ we color $d$ with the least color used on its in-neighbors;
\item once all vertices in $D_{t_i}$ with at least one incoming edge are colored we color the remaining vertices in $D_{t_i}$ with distinct colors not used so far on $D_{t_i}$.
\end{enumerate}
\noindent Lemma~\ref{lem:dag} guarantees that for each $t\in V(T)$, the vertices in $D_t$ all have distinct $c_D$-colors.
Moreover, for every directed path $F$ in $D$ the $c_D$-colors of the vertices along the path are non-increasing.
The decreasing sequence of $c_D$-colors we get after removing the repetitions is called the \emph{signature} of $F$.

From now on, we will forget for a while about the underlying poset $P$, and we will only concentrate on exploring the dag $D$.

Let $d\in V(D)$ and let $c_D(d)=\gamma \in [5^{k+1}]$. Given $d$ and $\gamma$, by Lemma~\ref{lem:dag}, there is a unique directed subgraph of $D$ which is a tree rooted at $d$ and spans all the vertices of $D$ that can be reached on a path from $d$ in $D$ with all vertices of the path $c_D$-colored with $\gamma$. Denote this tree by $\tree(d,\gamma)$.

For a subgraph $D'$ of $D$, we define $\proj{D'}$ to be a subgraph of $T$ spanned by 
\[
\set{t\in V(T)\mid  D_t\cap D'\neq \emptyset}.
\]
Next, as a further preparation for our branching program, we construct inductively for every decreasing sequence $\Gamma$ of colors from $[5^{k+1}]$ and
every ternary sequence $\alpha$ over $\set{0,1,2}$ of length $|\alpha|=|\Gamma|-1$ a family $\calF_{\Gamma}^\alpha$ of subtrees of $T$. As a basis for this construction we have a family $\calF_{(\gamma)}^{\emptyset}$ for each $\gamma\in[5^{k+1}]$. This family is defined to be 
\[
\calF_{(\gamma)}^{\emptyset} =\calF_{\gamma} = \set{\proj{\tree(d,\gamma)}\mid \text{$d$ is a source in $D$ with $c_D(d)=\gamma$}}.
\] 
An essential property of the families will be that the trees in $\calF_{\Gamma}^\alpha$ are pairwise disjoint for every decreasing sequence $\Gamma$ over $[5^{k+1}]$ and every ternary sequence $\alpha$ over $\set{0,1,2}$ with $|\alpha|=|\Gamma|-1$.

\begin{claim}
The family $\calF_{(\gamma)}^{\emptyset}$ contains pairwise disjoint subtrees of $T$ for every $\gamma \in [5^{k+1}]$.
\end{claim}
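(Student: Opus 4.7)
The plan is to argue by contradiction. Suppose two distinct sources $d_1,d_2\in V(D)$ with $c_D(d_1)=c_D(d_2)=\gamma$ give rise to projected trees that intersect, i.e. there is some $t\in V(T)$ contained in both $\proj{\tree(d_1,\gamma)}$ and $\proj{\tree(d_2,\gamma)}$. I will derive $d_1=d_2$ from this.

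First I would pin down the shared structure in $D$. By construction every vertex of $\tree(d_i,\gamma)$ has $c_D$-color equal to $\gamma$, and by the coloring rule the vertices inside any single bag $D_t$ receive pairwise distinct $c_D$-colors. Hence $\tree(d_1,\gamma)\cap D_t$ and $\tree(d_2,\gamma)\cap D_t$ are each a singleton, and moreover they coincide in a common vertex $d^*\in D_t$. Second, note that the projections $\proj{\tree(d_i,\gamma)}$ are connected subtrees of $T$: every edge of $D$ goes from a bag indexed by a parent in $T$ to a bag indexed by its child, so every directed path in $\tree(d_i,\gamma)$ projects to a descending path in $T$ starting at the bag containing $d_i$. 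This observation establishes that the families really consist of subtrees, and it will also drive the main argument.

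Let $t_i$ denote the bag containing $d_i$. Since both $t_1$ and $t_2$ are ancestors of $t$ in the rooted tree $T$, they are comparable; without loss of generality $t_1$ is an ancestor of $t_2$ (possibly equal). If $t_1=t_2$, then $d_1$ and $d_2$ lie in the same bag $D_{t_1}$ and share color $\gamma$, so distinctness of colors in $D_{t_1}$ forces $d_1=d_2$. If $t_1$ is a strict ancestor of $t_2$, then the unique directed path from $d_1$ to $d^*$ inside $\tree(d_1,\gamma)$ projects to the descending path from $t_1$ to $t$ in $T$, and in particular passes through the bag $D_{t_2}$ at some vertex $d_2^*$ of color $\gamma$. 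Distinctness of colors inside $D_{t_2}$ forces $d_2^*=d_2$, but then $d_2$ has an in-neighbor along this path in $D$, contradicting the assumption that $d_2$ is a source.

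I do not anticipate a serious obstacle here; the main thing to check carefully is the chain of implications ``shared bag $\Rightarrow$ shared vertex $\Rightarrow$ the path hits $D_{t_2}$'', which relies crucially on Lemma~\ref{lem:dag} (uniqueness of out-neighbors, hence of the descending path inside $\tree(d_1,\gamma)$ once a descending sequence of bags is fixed) and on the distinctness of $c_D$-colors within a single bag built into the coloring construction.
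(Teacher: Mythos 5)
Your argument is correct and follows the same route as the paper's proof: suppose two projected trees $\proj{\tree(d_1,\gamma)}$ and $\proj{\tree(d_2,\gamma)}$ share a vertex $t$, note that $t_1$ and $t_2$ are comparable ancestors of $t$, and then observe that the monochromatic path from the lower source to the shared vertex must pass through the bag of the higher one, forcing that source to have an in-neighbor and yielding a contradiction. The only small difference is that you separate out the case $t_1=t_2$ explicitly (the paper folds it into the ``unless $d_1=d_2$'' clause), and your ``descending'' should read ``ascending'' in the paper's root-at-bottom convention, but neither affects correctness.
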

\begin{proof}
Suppose to the contrary that $\calF_{\gamma}$ contains distinct subtrees $Q_1,Q_2$ of $T$ with root vertices $t_1,t_2$, respectively, that share a vertex $t$. In particular $t$ must be above both $t_1$ and $t_2$, implying that we either have $t_1$ below $t_2$  in $T$ or the other way round. Without loss of generality, assume that we have $t_1$ below $t_2$ in $T$. For $i=1,2$ let $d_i\in D_{t_i}$ be such that $Q_i=\proj{\tree(d_i,\gamma)}$. As $t$ is in both $Q_1$ and $Q_2$, there must exist a vertex $d\in D_t$ of $c_D$-color $\gamma$, and this vertex has to belong to $D_i:=\tree(d_i,\gamma)$ for $i=1,2$. In particular this implies that for $i=1,2$, there is a directed path from $d_i$ to $d$ in $D_i$ with all vertices along the path being of $c_D$-color $\gamma$.  As $t_1$ is below $t_2$ in $T$, the directed path from $d_1$ to $d$ has to go through $D_2$; however, the only vertex there of $c_D$-color $\gamma$ is $d_2$. This, unless $d_1=d_2$ and hence $Q_1=Q_2$, contradicts the fact that $d_2$ is a source vertex and so has no incoming edge.
\end{proof}

Now suppose that for a decreasing sequence $\Gamma$ over $[5^{k+1}]$ and a ternary sequence $\alpha$ over $\set{0,1,2}$ with $|\alpha|=|\Gamma|-1$, we are already given the family $\calF_{\Gamma}^\alpha$. Let $\gamma'$ be the last color in $\Gamma$, and let $\gamma\in [5^{k+1}]$ with $\gamma<\gamma'$. Then we define $\calF_{\Gamma \gamma}^{\alpha 0} := \calF_{\gamma}$. In particular, by the previous claim, we have that the subtrees in this family are pairwise disjoint. 

To construct the two other families $\calF_{\Gamma\gamma}^{\alpha 1}$ and $\calF_{\Gamma\gamma}^{\alpha 2}$ we will use an intermediate family $\calF_{\Gamma\gamma}^{\alpha+}$, which is produced by Algorithm~\ref{algo:families-construction-x-below-y}. For the description of this algorithm we need additional notation. For a color $\gamma\in [5^{k+1}]$, we call a vertex $t$ in $T$ a \emph{$\gamma$-break} if no vertex in $D_t$ has $c_D$-color $\gamma$. For an edge $tt'$ of $T$, $t$ being the parent of $t'$, and a color $\beta$ with $\beta<\gamma$, we say that $\gamma$ \emph{merges into} $\beta$ on $tt'$ if
\begin{enumerate}
	\item there is a vertex $d\in D_t$ of color $\gamma$ and a vertex $d'\in D_{t'}$ of color $\beta$, and
	\item there is an edge from $d$ to $d'$ in $D$.
\end{enumerate}

Finally, recall that for two vertices $u,v\in V(T)$ the unique path between them in $T$ is denoted by $[u,v]$.

%%%%%%%%%%%%%%%%%%%%%%%%%%%%%%%%%%%%%%%%%%%%%%%%%%%%%%%%%%%%%%%%
\begin{algorithm-hbox}[!ht]
	\caption{Construction of $\calF_{\Gamma\gamma}^{\alpha+}$ given $\calF_{\Gamma}^{\alpha}$ 
	with $\Gamma\neq\emptyset$ and $\gamma'$ being the last color of $\Gamma$}
	\label{algo:families-construction-x-below-y}
	$\calF_{\Gamma\gamma}^{\alpha+}\gets
	\emptyset$\; 
	
	\For{each $Q$ in
		$\calF_{\Gamma}^{\alpha}$}{
		$Q^+\gets\emptyset$, \quad $r\gets \text{root of $Q$ in $T$}$
		
		\For{each $t$ in $Q$}{
		\For{each $t'$ child of $t$ such that\\
			\qquad\qquad(a) $t'$ is not in $Q$,\\
			\qquad\qquad(b) $[r,t]$ contains a $\gamma$-break,\\
			\qquad\qquad(c) $\gamma'$ merges into $\gamma$ at $tt'$}{ $Q^+ \gets
			Q^+ \cup [r,t'] \cup \proj{\tree(d',\gamma)}$,\\ 
			\quad where $d'$ is the vertex of color $\gamma$ in $D_{t'}$}
		}
		\If{$Q^+\neq\emptyset$}{ add $Q^+$ to
			$\calF_{\Gamma\gamma}^{\alpha+}$}
	} return
	$\calF_{\Gamma\gamma}^{\alpha+}$
\end{algorithm-hbox}
%%%%%%%%%%%%%%%%%%%%%%%%%%%%%%%%%%%%%%%%%%%%%%%%%%%%%%%%%%%%%%%%

%%%%%%%%%%%%%%%%%%%%%%%%%%%%%%%%%%%%%%
\begin{figure}
\centering
\includegraphics[scale=1.5]{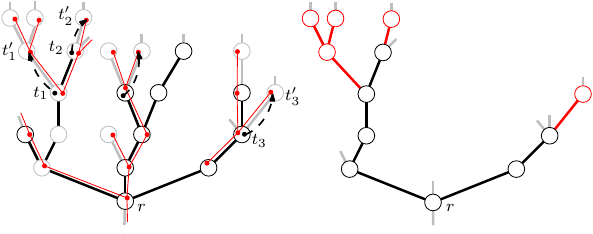}
\caption{\label{fig:tree-extension}Iteration step of Algorithm 3. On the left $Q$ is drawn with bolded lines, the color $\gamma$ is represented by red, and dashed arrows mark the places where $\gamma'$ merges into $\gamma$. On the right $Q^{+}$ is drawn with its primal section in black and its extended section it red.}
\end{figure}
%%%%%%%%%%%%%%%%%%%%%%%%%%%%%%%%%%%%%%

The following claim shows that $\calF_{\Gamma\gamma}^{\alpha+}$ can be split into two families $\calF_{\Gamma\gamma}^{\alpha1}$ and $\calF_{\Gamma\gamma}^{\alpha2}$ such that each of them consists of pairwise disjoint trees.

\begin{claim}
The family $\calF_{\Gamma\gamma}^{\alpha+}$, produced by Algorithm~\ref{algo:families-construction-x-below-y}, can be split into two parts, each consisting of pairwise disjoint subtrees of $T$.
\end{claim}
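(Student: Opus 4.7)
The plan is to show that the intersection graph of $\calF_{\Gamma\gamma}^{\alpha+}$ is bipartite, which immediately yields a $2$-coloring partitioning the family into two classes of pairwise disjoint subtrees of $T$. Because subtrees of a tree satisfy the Helly property (three pairwise intersecting subtrees of $T$ share a common vertex), their intersection graph is chordal; and a chordal graph is bipartite if and only if it is triangle-free. Hence it suffices to show that no $s\in V(T)$ is contained in three distinct members of $\calF_{\Gamma\gamma}^{\alpha+}$, and this is what I would prove by contradiction.

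Suppose $Q^+_1,Q^+_2,Q^+_3\in\calF_{\Gamma\gamma}^{\alpha+}$ all contain some $s$, where $Q^+_i$ extends $Q_i\in\calF_{\Gamma}^{\alpha}$ with root $r_i$. For each $i$ I would fix a qualifying leaf $t_i$ of $Q_i$ with $T$-child $t'_i$ certifying $s\in[r_i,t'_i]\cup\proj{\tree(d'_i,\gamma)}$, and classify $Q^+_i$ as \emph{type A} if $s\in Q_i$ and as \emph{type B} if $s\in\proj{\tree(d'_i,\gamma)}$ for some qualifying leaf. The degenerate case $s=t'_i$ falls automatically into type B, since then $s$ is the root of $\proj{\tree(d'_i,\gamma)}$. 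The pairwise disjointness of the $Q_i$'s allows at most one type-A tree through $s$, so two of the three trees, say $Q^+_A$ and $Q^+_B$, must be of type B. The whole argument then reduces to deriving a contradiction from two type-B members meeting at $s$.

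The key tool is the uniqueness of the backward color-$\gamma$ trail: starting from the (unique) color-$\gamma$ vertex $d_s\in D_s$, Lemma~\ref{lem:dag} together with the distinctness of $c_D$-colors within each bag $D_t$ implies that every vertex of $D$ has at most one color-$\gamma$ in-neighbor, so the trail from $d_s$ is uniquely determined. Since $s$ lies in both $\proj{\tree(d'_A,\gamma)}$ and $\proj{\tree(d'_B,\gamma)}$, this unique trail must visit both $d'_A$ and $d'_B$, which places $t'_A,t'_B$ on the $s$-to-root path of $T$; without loss of generality $t_B$ is a proper ancestor of $t_A$ on this path. The disjointness $Q_A\cap Q_B=\emptyset$ combined with $t_B\in Q_B$ forces $r_A$ to be a proper descendant of $t_B$ on this path (else $t_B$ would lie on $[r_A,t_A]\subseteq Q_A$), and hence the $\gamma$-break $b_A\in[r_A,t_A]$ supplied by condition~(a) of Algorithm~\ref{algo:families-construction-x-below-y} is also a proper descendant of $t_B$; moreover $b_A\neq t_A$ because $D_{t_A}$ contains a color-$\gamma$ vertex belonging to the trail, so $b_A$ is a proper ancestor of $t_A$. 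Thus $b_A$ lies strictly between $t_B$ and $t_A$ on the $s$-to-root path.

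The hardest step, which I expect to be the main obstacle, is ensuring that the trail actually extends past $d'_B$ all the way to $D_{t_B}$, so that $b_A$ is forced onto its $T$-projection. For this I would invoke condition~(b): $d'_B$ has a color-$\gamma'$ in-neighbor and is therefore not a source of $D$, so the minimum-color rule defining $c_D$ (together with $c_D(d'_B)=\gamma$) provides $d'_B$ with a color-$\gamma$ in-neighbor inside $D_{t_B}$. The trail consequently continues past $d'_B$ into $D_{t_B}$; its $T$-projection is a contiguous subpath of the $s$-to-root path reaching at least $t_B$, and this subpath contains every vertex strictly between $s$ and $t_B$ --- in particular $b_A$. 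But a color-$\gamma$ trail consists of color-$\gamma$ vertices and cannot visit a $\gamma$-break, the desired contradiction.
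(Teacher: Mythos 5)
Your proof is correct and follows essentially the same route as the paper: your type-A/type-B split is exactly the paper's primal/extended decomposition of each $Q^+$, and the core contradiction (a $\gamma$-break of one tree lying on the $T$-projection of the color-$\gamma$ trail rooted at the other tree's $d'$) is identical. Your detour of extending the trail backward past $d_B'$ into $D_{t_B}$ via the merge condition and the minimum-color rule is sound but unnecessary, since $b_A$ already lies on $[t_B',s]$, which is covered by the trail from $d_B'$ to $d_s$ that exists by definition of $s\in\proj{\tree(d_B',\gamma)}$.
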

\begin{proof} 
Every subtree $Q^+\in \calF_{\Gamma\gamma}^{\alpha+}$ produced by Algorithm~\ref{algo:families-construction-x-below-y} comes from some tree $Q\in\calF_{\Gamma}^{\alpha}$. Based on this, we split each such $Q^+$ into two sections, the \emph{primal section} $Q^+\cap Q$ and the \emph{extended section} $Q^+\setminus Q$.
	
We show that for every two distinct trees $Q_1^+,Q_2^+\in \calF_{\Gamma\gamma}^{\alpha+}$ both their primal and their extended sections are disjoint, respectively. This is immediate for the primal sections, as those are subgraphs of some disjoint members of $\calF_{\Gamma}^{\alpha}$.

Now consider the extended sections of $Q_1^+,Q_2^+$, and in order to get a contradiction, suppose that some vertex $t$ is in both of them. For $i=1,2$ let $Q_i$ be the pre-image of $Q_i^+$ in $\calF_{\Gamma}^{\alpha}$ with root vertex $r_i$.  Note that $r_i$ is also the root vertex of $Q_i^+$. Since for $i=1,2$, the vertex $t$ is in the extended section of $Q_i^+$, there must be a vertex $t_i$ of $Q_i$ and a child $t_i'$ of $t_i$ such that $t \in \proj{\tree(d_i',\gamma)}$, where $d_i'$ is the vertex of color $\gamma$ in $D_{t_i'}$; moreover there is also a $\gamma$-break at some vertex $t_i''$ on $[r_i,t_i]$. On the other hand, as members of $\calF_{\Gamma}^{\alpha}$, the subtrees $Q_1$ and $Q_2$ are disjoint; hence $[r_1,t_1]$ and $[r_2,t_2]$ need to be disjoint intervals on the path from the root of $T$ to $t$. Assuming without loss of generality that $t_1$ is below $r_2$ in $T$, we see that $t_2''$ is a $\gamma$-break in $[t_1,t]$, which contradicts the fact that  $t \in \proj{\tree(d_1',\gamma)}$.   
	
Hence, the intersection graph of the family $\calF_{\Gamma}^{\alpha+}$ is a chordal graph (as every intersection graph of subtrees of a tree is chordal) with clique number two and so it is two-colorable. Now a two-fcoloring induces a partition of $\calF_i^{\alpha+}$ into two families such that each consists of pairwise disjoint subtrees of $T$.
\end{proof}

The families $\calF_{\Gamma}^{\alpha}$ form the base of the key subprogram of our branching program designed to prove Theorem~\ref{thm:main}. 

\begin{lemma}\label{lem:mega-lemma}
For every decreasing color-sequence $\Gamma=(\gamma_1,\dots,\gamma_{\ell})$,
there is a branching program $\calB_{\Gamma}$, depending on at most $3^{\ell+1}-1$ linear orders on $V(T)$, such that
for every queried $x,y\in V(T)$ with $x\wedge y$ below $x$ (resp.~ $x\wedge y$ below $y$) in $T$, 
$\calB_{\Gamma}$ outputs $1$ if and only if
there is a path from $D_{x\wedge y}$ to $D_{x}$ (resp.~from $D_{x\wedge y}$ to $D_{ y}$) in $D$ with signature $\Gamma$.
\end{lemma}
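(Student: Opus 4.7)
The plan is to induct on $\ell = |\Gamma|$.

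For the base case $\ell = 1$ and $\Gamma = (\gamma_1)$, a signature-$(\gamma_1)$ path from $D_{x \wedge y}$ to $D_x$ uses only vertices of color $\gamma_1$, and therefore lies inside some $\tree(d, \gamma_1)$ for the source $d$ sitting at the top of the $\gamma_1$-chain through $D_{x \wedge y}$. Consequently, such a path exists if and only if both $x \wedge y$ and $x$ belong to a common member of $\calF_{\gamma_1}$. Since the members of $\calF_{\gamma_1}$ are pairwise disjoint by the claim preceding this lemma, I would combine Tool~\ref{cor:set-memb} applied to the set $C := \bigcup_{Q \in \calF_{\gamma_1}} V(Q)$ (costing $3$ permutations, to decide $x \in C$) with Tool~\ref{prop:mega-tool} applied to the edge-coloring of $T$ that marks an edge red exactly when its two endpoints lie in distinct members of $\calF_{\gamma_1}$ or exactly one of them lies in $C$ (costing $5$ permutations, to detect such a boundary edge on $[x \wedge y, x]$). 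Returning $1$ precisely when $x \in C$ and no such boundary is detected settles the base case using $3+5 = 8 = 3^{1+1}-1$ permutations.

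For the inductive step, assume $\calB_\Gamma$ is already built with at most $3^{\ell+1}-1$ permutations, and construct $\calB_{\Gamma\gamma}$. Every signature-$\Gamma\gamma$ path from $D_{x \wedge y}$ to $D_x$ splits at a unique edge $(t, t')$ on $[x \wedge y, x]$ into a signature-$\Gamma$ prefix reaching $D_t$ in color $\gamma'$ (the last color of $\Gamma$), the merge of $\gamma'$ into $\gamma$ at $(t, t')$, and a pure-$\gamma$ suffix from $D_{t'}$ to $D_x$. The families built just before the lemma encode exactly this split: each tree $Q^+$ in $\calF_{\Gamma\gamma}^{\alpha 1} \cup \calF_{\Gamma\gamma}^{\alpha 2}$ has the form $[r, t'] \cup \proj{\tree(d', \gamma)}$, with $[r, t']$ recording the prefix (recursively, via some $Q \in \calF_\Gamma^\alpha$) and $\proj{\tree(d', \gamma)}$ recording the suffix, while $\calF_{\Gamma\gamma}^{\alpha 0} = \calF_\gamma$ handles the degenerate purely-$\gamma$ case. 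I would branch $\calB_{\Gamma\gamma}$ three ways, one per family type. In each branch, the base-case technique --- Set Membership on that family together with a Color Detection on its boundaries and on the base/tail interface --- localizes both $x \wedge y$ and $x$ to a common $Q^+$ and certifies that $x \wedge y$ lies in the base while $x$ lies in the tail; the two extended branches additionally invoke $\calB_\Gamma$ recursively to verify the signature-$\Gamma$ prefix inside the base, while the purely-$\gamma$ branch is a second base-case test. Counting: each sub-program uses at most $3^{\ell+1}-1$ permutations, and $2$ extra permutations at the root suffice to route among the three branches, giving a total of $3(3^{\ell+1}-1) + 2 = 3^{\ell+2}-1$. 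The case $x \wedge y$ below $y$ is symmetric and uses the same permutations.

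The main obstacle is that the prefix endpoint $t$ is not directly accessible to the branching program --- it only sees relative positions of $x$ and $y$ in fixed permutations. The reduction must therefore let $\calB_\Gamma$ implicitly locate $t$ from the structural regularity of $Q^+$; this is exactly what the $\gamma$-break condition in Algorithm~\ref{algo:families-construction-x-below-y} enforces by forbidding color $\gamma$ from reappearing between $r$ and $t$, so that the boundary-edge coloring used in the outer Color Detection step marks the merge edge unambiguously. Proving that the resulting construction is both sound (each ``yes'' corresponds to a genuine signature-$\Gamma\gamma$ path) and complete (each such path is captured by exactly one of the three branches, via the recursive $\alpha$-decomposition of its prefix) is where the technical heart of the argument lies, and the pairwise disjointness of the trees in each $\calF_{\Gamma\gamma}^{\alpha j}$ established by the previous claim is the structural fact that makes the Set Membership and Color Detection tests meaningful at every level of the recursion.
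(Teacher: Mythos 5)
Your base case is correct and is essentially the paper's base case in light disguise (the paper checks membership of $x\wedge y$ in some tree of $\calF_{\gamma_1}$ via one Color Detection, then checks that $x\wedge y$ and $x$ land in the same tree via another). The high-level decomposition you state for the inductive step --- a signature-$\Gamma$ prefix ending at some $D_t$, a merge of $\gamma'$ into $\gamma$ at an edge $(t,t')$, and a pure-$\gamma$ suffix from $D_{t'}$ to $D_x$ --- is also the decomposition the paper exploits. Where the proposal genuinely breaks down is the claim that $\calB_{\Gamma\gamma}$ can simply ``branch three ways, one per family type, and invoke $\calB_\Gamma$ recursively.''

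There are two concrete problems. First, $\calB_\Gamma$ is, by the inductive hypothesis, a black box that answers only queries of the form ``is there a signature-$\Gamma$ path from $D_{x\wedge y}$ to $D_x$?'' The prefix you need to certify ends at $D_t$ for an intermediate vertex $t$, not at $D_x$, and $t$ is not a legitimate query argument for the branching program; so a literal recursive call to $\calB_\Gamma$ computes the wrong predicate. You explicitly acknowledge this (``the prefix endpoint $t$ is not directly accessible'') and then assert that the $\gamma$-break condition ``lets $\calB_\Gamma$ implicitly locate $t$,'' but this does not follow: the $\gamma$-break condition is a property of how the families are constructed, not of how a previously built branching program interprets its inputs. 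Deferring the soundness/completeness to ``the technical heart of the argument'' in the final paragraph is an admission that the inductive step is not actually proved. Second, ``one branch per family type $j\in\{0,1,2\}$'' conflates the last digit of $\alpha$ with all of $\alpha$. The families are indexed by the entire ternary string $\alpha$ of length $|\Gamma|$, not just its last character, and there are $3^{|\Gamma|}$ of them; knowing only $j$ does not tell you which family to do Set Membership or Color Detection against. Relatedly, the claim that ``$2$ extra permutations at the root suffice to route among the three branches'' does not correspond to any of the paper's tools and is not justified.

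The paper resolves both issues with a single design decision: it runs forward from $\gamma_1$, maintaining the invariant that at step $i$ the current tree $Q\in\calF_{\Gamma_i}^\alpha$ contains $x\wedge y$ (the root $r$ of $Q$ is preserved when passing to $Q^+$), and it determines $\alpha$ one character at a time at the nodes $N_{i,\alpha}$ by asking, via Color Detection on $[x\wedge y, x]$, which of the three possible extensions applies. Because the queried pair is always $(x\wedge y, x)$, the intermediate vertex $t$ never needs to be named, and because the branching tracks the current $\alpha$ explicitly through the node index, the family being probed is always determined. If you want to phrase the paper's construction as a recursion, the induction must pass along the current $\alpha$ (equivalently, the current family) as state rather than return a bare boolean, and the recursive call at level $i$ must again be made on the same pair $(x\wedge y, x)$; the arithmetic $f(\ell+1)=3f(\ell)+2$ then materializes as the paper's count $5+\sum_\alpha 3+\sum_i\sum_\alpha 12=3^{\ell+1}-1$, but only after fixing the structure of the recursion in this way.
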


\begin{proof}
The two setups are clearly symmetric, so in the proof we concentrate only on the case when the queried vertices $x,y$ are such that $x\wedge y$ is strictly below $x$ in $T$.

For $1\leq i\leq \ell$ let $\Gamma_i$ denote the prefix of $\Gamma$ of length $i$. 
The root node $N_{\text{root}}$ of $\mathcal{B}_{\Gamma}$ starts with checking whether there is a vertex $d\in D_{x\wedge y}$ with $c_D$-color $\gamma_1$, which happens exactly if $x\wedge y$ is a vertex of some subtree $Q\in \calF_{(\gamma_1)}^{\emptyset}$. 
This can be verified with the Color Detection Tool (Tool~\ref{prop:mega-tool}) using five linear orders. 
Indeed, color the edges of $T$ as follows. For every $Q\in \calF_{(\gamma_1)}^{\emptyset}$ and every vertex $t$ of  $Q$, color all edges going from $t$ to its children in $T$ with color \red and all other edges of $T$ by \green.
Then $x\wedge y$ is a vertex of some $Q\in \calF_{(\gamma)}^{\emptyset}$ exactly if the first colored edge on $[x\wedge y,x]$ in $T$ is \red. 
If the answer to this question is no, then $N_{\text{root}}$ immediately returns $0$, otherwise let $d$ be the vertex in $D_{x\wedge y}$ with $c_D$
-color $\gamma_1$. The branching program now proceeds step-by-step verifying whether the consecutive colors of the signature of the path from $d$ to $D_{x}$ (which by Lemma~\ref{lem:dag} is unique) agree with those in $\Gamma$.

For this we define for every $1\leq i\leq \ell$ and $\alpha\in \{0,1,2\}^{i-1}$ a subprogram $N_{i,\alpha}$. 
The branching program is going to visit a sequence of these nodes starting with $N_{1,\emptyset}$. We always make sure that when $N_{i,\alpha}$ is visited, then the following invariant holds:
\begin{itemize}
\item[--] $\Gamma_i$ is a prefix of the signature of the path from $d$ to $D_{x}$; moreover,
\item[--] if $\Gamma_i$ is a \emph{proper} prefix of the path from $d$ to $D_{x}$ then there is a tree $Q\in \calF_{\Gamma_i}^\alpha$ such that $x\wedge y$ and $t$ are both vertices of $Q$, where $t$ is the unique vertex of $T$ such that the maximal subpath of the path from $d$ to $D_{x}$ with signature $\Gamma_i$ ends in $D_t$;
\item[--] otherwise when the signature of the path from $d$ to $D_{x}$ is equal to $\Gamma_i$, there is a tree $Q\in \calF_{\Gamma_i}^\alpha$ such that $x\wedge y$ and $x$ are both in $Q$.
\end{itemize}
Note that $N_{\text{root}}$ already checked that $x\wedge y$ is contained in some tree $Q\in \calF_{(\gamma_1)}^{\emptyset}$ that contains $\proj{\tree (d,\gamma_1)}$ as a subtree, and hence the invariant holds for $i=1$ and $\alpha=\emptyset$.

Now we continue with the description of $N_{i,\alpha}$  for $1\leq i < \ell$ and $\alpha\in\set{0,1,2}^{i-1}$.
This subprogram consists of two steps.

\noindent \textbf{Step 1.} 
The first task of $N_{i,\alpha}$ is to establish whether 
the next color on the path from $d$ to $D_{x}$ is $\gamma_{i+1}$ or not. 
This can be done using five linear orders with Color Detection Tool (Tool~\ref{prop:mega-tool}).
Indeed, color the edges of $T$ as follows.
For every $Q \in \calF_{\Gamma_i}^\alpha$, every vertex $t$ of $Q$
and every child $t'$ of $t$ in $T$ not belonging to $Q$, color the edge $tt'$ \red if $\gamma_i$ merges into
$\gamma_{i+1}$ at $tt'$, otherwise color the edge $tt'$ \green. Now, by the invariant that holds, the first colored edge on the path $[x\wedge y,x]$ is \red exactly if the path from $d$ to $D_{x}$ continues after its initial segment of signature $\Gamma_i$ with color $\gamma_{i+1}$.
Otherwise the path from $d$ to $D_{x}$ either changes to some different color after $\Gamma_i$ or its signature is exactly $\Gamma_i$. In both cases 
$N_{\text{root}}$ outputs $0$.

Note that at this point if, the program is still running then we already know that $\Gamma_{i+1}$
is a prefix of the signature of the path from $d$ to $D_{x}$. In particular, $\Gamma_{i}$ is a proper prefix. By the invariant, this implies that there is a tree $Q\in\calF_{\Gamma_i}^\alpha$ such that $x\wedge y$ and $t$ are both vertices of $Q$, where $t$ is the unique vertex of $T$ such that the maximal subpath of the path from $d$ to $D_{x}$ with signature $\Gamma_i$ ends in $D_t$.

\noindent \textbf{Step 2.} The second task of $N_{i,\alpha}$ is to decide which subprogram to continue with, i.e., which is the relevant family out of  $\calF_{\Gamma_{i+1}}^{\alpha0}$, $\calF_{\Gamma_{i+1}}^{\alpha1}$ and $\calF_{\Gamma_{i+1}}^{\alpha2}$.

The family $\calF_{\Gamma_{i+1}}^{\alpha0}$ is relevant if there is no $\gamma_{i+1}$-break on the path from $x\wedge y$ to the vertex $t$ in $Q$.
This again can be checked using five linear orders with the Color Detection Tool (Tool~\ref{prop:mega-tool}).
Indeed color the edges of $T$ as follows.
First for every $Q \in \calF_{\Gamma_i}^\alpha$, every vertex $t$ of $Q$ and every child $t'$ of $t$ in $T$ not belonging to $Q$, color the edge $tt'$ \green.
Then for each vertex $t\in V(T)$ which is a $\gamma_{i+1}$-break color all
edges from $t$ going to its children with \red (possibly overriding \green). Note that as $x\wedge y$ is in $Q$ and  $x$ is not, we necessarily have a colored edge on the path $[x\wedge y,x]$, and the first colored edge on this path is \green exactly if there is no $\gamma_{i+1}$-break on $[x\wedge y,t]$. In this case, $N_{i+1,\alpha0}$ is called next. Note that the required invariant holds by the construction of $\calF_{\Gamma_{i+1}}^{\alpha0}$.

Otherwise, if the first colored edge on $[x\wedge y,x]$ is \red, then there is an $\gamma_{i+1}$-break on the path $[x\wedge y,t]$.
In this case we know that $\calF_{\Gamma_i}^{\alpha+}$
contains a tree $Q^+$ which contains the projection of the maximal subpath of
the path from $d$ to $D_{x}$ with signature $\Gamma_{i+1}$. Note that in particular, it also contains $x\wedge y$.
Now it remains to decide whether this tree $Q^+$ belongs to 
$\calF_{\Gamma_{i+1}}^{\alpha1}$ or to $\calF_{\Gamma_{i+1}}^{\alpha2}$. 
Any tree in  $\calF_{i}^{\alpha+}=\calF_{i+1}^{\alpha,1} \cup \calF_{i+1}^{\alpha,2}$
has been produced as the offspring of some tree in $\calF_{\Gamma_i}^{\alpha}$. Based on this, one can partition $\calF_{\Gamma_i}^{\alpha}$ into three 
subfamilies: those with an offspring in $\calF_{\Gamma_{i+1}}^{\alpha1}$,
those with an offspring in $\calF_{\Gamma_{i+1}}^{\alpha2}$ and the rest. Note that $Q^+$ is the offspring of the unique tree $Q$ that contains $x\wedge y$, so with a repeated application of the Color Detection Tool (Tool~\ref{prop:mega-tool}), we can identify, each time using five linear orders, the subfamily (out of the first two)
of $\calF_{\Gamma_i}^{\alpha}$ containing $Q$. 
Indeed, first take the subfamily of $\calF_{\Gamma_i}^{\alpha}$ containing the trees with an offspring in $\calF_{\Gamma_{i+1}}^{\alpha1}$. 
For every tree $Q$ in this family and every vertex $t$ of $Q$, color all edges going from $t$ to its children in $T$ with \red and color all other edges of $T$ with \green. Now the first colored edge on $[x\wedge y, x]$ is \red exactly if $x\wedge y$ is in one of the trees from the first subfamily. After possibly repeating this for the second subfamily, we will know in which of them is $Q$, and based on that the program continues either with the subprogram $N_{i+1,\alpha1}$ or with $N_{i+1,\alpha2}$. Whichever is chosen, by the construction of  $\calF_{\Gamma_{i+1}}^{\alpha1}$ and $\calF_{\Gamma_{i+1}}^{\alpha2}$ the required invariant holds.

This completes the description of the subprograms $N_{i,\alpha}$ for $1\leq i< \ell$ and $\alpha\in \{0,1,2\}^{i-1}$. 

It remains to describe the finishing subprograms $N_{\ell,\alpha}$ with $\alpha\in\set{0,1,2}^{\ell-1}$. When $N_{\ell,\alpha}$ is called then
by the invariant we know that $\Gamma$ is the prefix of the signature of the path from $d$ to $D_{x}$, and so 
the subprogram $N_{\ell,\alpha}$ needs to check whether the signature of this path is actually equal to $\Gamma$. Again by the invariant
this happens exactly if $x\wedge y$ and $x$ belong to the same tree in $\calF_{\Gamma}^{\alpha}$.
This can again be checked using the Color Detection Tool (Tool~\ref{prop:mega-tool}) using five linear orders. Color all edges of $T$ that do not belong to any tree $Q\in\calF_{\Gamma}^\alpha$ with color \red and keep all the other edges of $T$ uncolored. Then the first edge on the path from $x\wedge y$ to $x$ is \red exactly if $x\wedge y$ and $x$ belong to the same tree in $\calF_{\Gamma}^{\alpha}$. In case they do, $N_{\text{root}}$ returns $1$, otherwise it returns $0$.

By construction the branching program $\calB_{\Gamma}$ clearly returns $1$ if and only if there is a path from $D_{x\wedge y}$ to $D_x$ in $D$ with signature $\Gamma$. What is still missing is to count the number of linear orders $\calB_{\Gamma}$ is depending on. Note that every linear order that $\calB_{\Gamma}$ uses comes from an application of the Color Detection Tool (Tool~\ref{prop:mega-tool}). The nodes $N_{\text{root}}$ and $N_{\ell,\alpha}$, $\alpha\in \{0,1,2\}^{\ell-1}$ both involve one, while the nodes $N_{i,\alpha}$, $1\leq i<\ell$, $\alpha\in\{0,1,2\}^{i-1}$ all involve four possible applications of Tool~\ref{prop:mega-tool}. In each of these applications two linear orders out of the total five are always the same, so the total number of linear orders appearing is at most
\[
5+\sum_{\alpha\in \{0,1,2\}^{\ell-1}} 3+ \sum_{i=1}^{\ell-1} \sum_{\alpha\in \{0,1,2\}^{i-1}}4\cdot 3=5+3^{\ell} + 4\sum_{i=1}^{\ell-1}3^i=5+3^{\ell} + 4\frac{3^{\ell}-3}{3-1}=3^{\ell +1}-1.
\]
\end{proof}

\section{The branching program for Theorem~\ref{thm:main}}\label{sec:main-program}

Our aim here is to design a branching program to answer queries of the form "is $x\leq y$?" for elements $x,y\in P$. 
The branching program should depend on linear orders on the poset $P$ and so we note that in the remaining part of this paper even though we will mostly construct linear orders on the vertices of the tree $T$ we understand without saying that the corresponding linear order on $P$ considered for the branching program is the one induced by the linear order restricted to the vertices of the form $\root{z}$ for
some $z\in P$.

The root node $N_{\text{root}}$ of this program first of all quickly separates the case $x=y$.
This can be done with two initial linear orders on $P$ where one is the reversal of the other. If $x\leq y$ in both linear orders then we must have $x=y$ and $N_{\text{root}}$ outputs $1$.
Therefore, in what follows, we assume that the queried elements $x$ and $y$ are distinct. 

The key ingredient behind the operation of the branching program is summarized in the following lemma. For $x,y\in P$ let $m_{xy}=\root{x}\wedge\root{y}$.

\begin{lemma}\label{lem:two-seq}
We have $x\leq y$ in $P$ if and only if there exists
\begin{itemize}
\item[--] a decreasing color sequence $\Gamma=(\gamma_1,\dots,\gamma_{\ell})$ 
together with a path $F_x$ in $D$ such that it has signature $\Gamma$
and $\proj {F_x}=[m_{xy},\root{x}]$;
\item[--] a decreasing color sequence $\Delta=(\delta_1,\dots,\delta_{m})$ with $\delta_1=\gamma_1$ 
together with a path $F_y$ in $D$ such that it has signature $\Delta$
and $\proj {F_y}=[m_{xy},\root{y}]$;
\item[--] a vertex $d_x=(\root{x},v^x)\in D_{\root{x}}$ such that $c_D(d_x)=\gamma_{\ell}$ and $v^x_{c(x)}='>'\text{ or }'\!='$;
\item[--] a vertex $d_y=(\root{y},v^y)\in D_{\root{y}}$ such that $c_D(d_y)=\delta_{m}$ and $v^y_{c(y)}='<'\text{ or }'\!='$.
\end{itemize}
\end{lemma}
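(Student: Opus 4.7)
The plan is to prove both directions by exhibiting a single witness element $z\in P$ that drives both $F_x$ and $F_y$ simultaneously, satisfies $x\leq z$, and satisfies $z\leq y$, so that $x\leq y$ follows.

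For the backward implication I would argue as follows. The hypothesis $\gamma_1=\delta_1$, combined with the fact that the vertices in $D_{m_{xy}}$ carry pairwise distinct $c_D$-colors, forces $F_x$ and $F_y$ to share their starting vertex $d_0=(m_{xy},v^0)\in D_{m_{xy}}$. By the very definition of $\act{m_{xy}}$, there is some $z\in P$ with $\root z$ at or below $m_{xy}$ in $T$ such that $\ve(z,m_{xy})=v^0$. For such a $z$ the sequence $\bigl((t,\ve(z,t))\bigr)_{t\in[m_{xy},\root x]}$ is already a directed path in $D$ emanating from $d_0$, and by the uniqueness of out-neighbours from Lemma~\ref{lem:dag} this path must coincide step-by-step with $F_x$; in particular $v^x=\ve(z,\root x)$. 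Since $x\in B_{\root x}$ has colour $c(x)$ one has $\rep{c(x)}{\root x}=x$, and therefore $v^x_{c(x)}\in\set{>,=}$ forces $z\geq x$. The analogous argument on the $F_y$-side gives $z\leq y$, so $x\leq z\leq y$ as required.

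For the forward implication I would fix a cover chain $x=w_0\lessdot w_1\lessdot\dots\lessdot w_k=y$ in $P$. Each cover edge $w_iw_{i+1}$ sits in some common bag of the tree-decomposition, so consecutive $T$-subtrees of the $w_i$'s share a vertex, and hence the union of all these subtrees is a connected subtree of $T$. This union contains $\root x$ (from the $T$-subtree of $w_0$) and $\root y$ (from the $T$-subtree of $w_k$), therefore it contains the entire $T$-path $[\root x,\root y]$, and in particular the vertex $m_{xy}$. Consequently $m_{xy}$ lies in the $T$-subtree of some $w_j$, i.e.\ $w_j\in B_{m_{xy}}$. I would then set $z:=w_j$, observe that $x\leq z\leq y$ and $\root z$ lies at or below $m_{xy}$, and take $F_x$ and $F_y$ to be the directed paths in $D$ traced out by $z$ over $[m_{xy},\root x]$ and $[m_{xy},\root y]$ respectively, with $\Gamma$ and $\Delta$ their signatures. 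The endpoint conditions $v^x_{c(x)}\in\set{>,=}$ and $v^y_{c(y)}\in\set{<,=}$ follow from $\rep{c(x)}{\root x}=x$, $\rep{c(y)}{\root y}=y$ and $x\leq z\leq y$, while $\gamma_1=\delta_1$ is automatic because both paths emanate from the common vertex $(m_{xy},\ve(z,m_{xy}))$ of $D_{m_{xy}}$.

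The substantive point in the whole argument is the pivot step in the forward direction, namely producing some $w_j\in B_{m_{xy}}$; this rests on the standard observation that an overlapping chain of subtrees of $T$ merges into a single connected subtree, combined with the tree-decomposition property that every cover edge has both endpoints in a common bag. Everything else is bookkeeping with the vectors $\ve(\cdot,\cdot)$ and repeated use of the uniqueness of directed-path extensions provided by Lemma~\ref{lem:dag}.
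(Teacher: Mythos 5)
Your proof is correct and follows essentially the same route as the paper's: the forward direction uses a cover chain of $x\leq y$ together with the connectivity of the union of the chain-elements' $T$-subtrees to find a witness $z\in B_{m_{xy}}$, and the backward direction uses $\gamma_1=\delta_1$ to merge the starting vertices of $F_x$ and $F_y$ and then Lemma~\ref{lem:dag} to identify both paths with the ones traced by a single witness $z$, reading off $x\leq z\leq y$ from the vector entries via $\rep{c(x)}{\root x}=x$ and $\rep{c(y)}{\root y}=y$. The only difference is cosmetic (order of the two implications and notation).
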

\begin{proof}
First suppose $x\leq y$ in $P$, and let $x=z_1<z_2<\cdots<z_s=y$ be a cover chain for this relation in $P$ (note that here $s=1$ is possible if $x=y$). By the basic properties of a tree decomposition, we know that the union of the subtrees of $z_1,\dots,z_s$ forms a subtree of $T$ containing the path from $\root{x}$ to $\root{y}$. In particular, there must be an index $1\leq i\leq s$ such that $z_i\in B_{m_{xy}}$. Then $\root{z_i}$ must be below $m_{xy}$ in $T$, and so the vector $\ve(z_i,t)$ is defined for every vertex $t$ above $m_{xy}$. Let $F_x$ and $F_y$ be the paths in $D$ from $(m_{xy},\ve(z_i,m_{xy}))$ to $d_x=(\root{x},\ve(z_i,\root{x}))$ and from $(m_{xy},\ve(z_i,m_{xy}))$ to $d_y=(\root{y},\ve(z_i,\root{y}))$ containing only vertices of the form $(t,\ve(z_i,t))$. Then clearly $\proj {F_x}=[m_{xy},\root{x}]$ and $\proj {F_y}=[m_{xy},\root{y}]$. Now let $\Gamma=(\gamma_1,\dots,\gamma_{\ell})$ and $\Delta=(\delta_1,\dots,\delta_{m})$ be the signatures of $F_x$ and $F_y$ respectively. Here $\gamma_1=\delta_1$ is just the $c_D$-color of  $(m_{xy},\ve(z_i,m_{xy}))$. To finish this direction, just note that from $x\leq z_i\leq y$, it follows that  $\ve_{c(x)}(z_i,\root{x})='>'\text{ or }'\!='$ and $\ve_{c(y)}(z_i,\root{y})='<'\text{ or }'\!='$.

Now for the backwards implication, first note that as $\gamma_1=\delta_1$ the initial vertex of the paths $F_x$ and $F_y$ has to be the same vertex in $D_{m_{xy}}$, let us denote it by $d=(m_{xy},v)$. Now let $z\in P$ be such that $\root{z}$ is below $m_{xy}$ in $T$ and $v=\ve(z,m_{xy})$. Note that by Lemma~\ref{lem:dag}, all vertices along the paths $F_x$ and $F_y$ are of the form $(t,\ve(z,t))$. In particular, we have $v^x=\ve(z,\root{x})$ and $v^y=\ve(z,\root{y})$. However, then  $v^x_{c(x)}='>'\text{ or }'\!='$ and $v^y_{c(y)}='<'\text{ or }'\!='$ just mean that $x\leq z \leq y$ in $P$, as required.
\end{proof}

With Lemma~\ref{lem:two-seq} in place, we can continue with the description of the branching program. $N_{\text{root}}$ will have a child $N_{\Gamma,\Delta}$ for every pair of decreasing color sequences $\Gamma,\Delta$ with the same starting color, which will return $1$ exactly if the requirements from Lemma~\ref{lem:two-seq} are satisfied with these color sequences. $N_{\text{root}}$ then will call its children one-by-one until one of them returns a $1$, in which case $N_{\text{root}}$ also returns a $1$. Otherwise, when all of its children return a $0$, $N_{\text{root}}$ also returns a $0$.

What is left now is to describe how $N_{\Gamma,\Delta}$ works for given decreasing color sequences $\Gamma=(\gamma_1,\dots,\gamma_{\ell})$ and $\Delta=(\delta_1,\dots,\delta_{m})$ with $\gamma_1=\delta_1$. It will have two children $N_{>,\Gamma}$ and $N_{<,\Delta}$, whose task will be to check the existence of the pairs $(F_x,d_x)$ and $(F_y,d_y)$ from Lemma~\ref{lem:two-seq}, respectively, and so $N_{\Gamma,\Delta}$ will return a $1$ exactly if both of its children succeed.

Because of symmetry, here we will only concentrate on the description of $N_{>,\Gamma}$, everything stated translates naturally to the setting of $N_{<,\Delta}$. 

To start with, $N_{>,\Gamma}$ handles the question about the existence of $d_x$. 
This can be done using the Set Membership Tool (Tool~\ref{cor:set-memb}) using three linear orders. Indeed, set 

$S_{\gamma_{\ell}}=\set{z\in P\ \mid\ \exists \ d=(\root{z},v)\in D_{\root{z}}\text{ with }c_D(d)=\gamma_{\ell}\text{ and }v_{c(z)}='>'\text{ or }'\!='}$.

Then $d_x$ exists exactly if $x\in S_{\gamma_{\ell}}$. 
If this is not the case then $N_{>,\Gamma}$ returns $0$; otherwise it continues with handling  the question about 
the existence of $F_x$.

For this it next checks whether $m_{xy}=\root{x}$ or $m_{xy}$ is strictly below $\root{x}$.  This can be easily done by looking at the left-first-search and right-first-search order of the vertices of $T$. 

\textbf{Case 1 ($m_{xy}=\root{x}$)}. Note that we already know that $D_{\root{x}}$ contains a vertex of color $\gamma_{\ell}$, so in this case a suitable $F_x$ exists if and only if $\ell=1$, so in this case $N_{x,\Gamma}$ returns a $1$ if and only if $\ell=1$.

\textbf{Case 2 ($m_{xy}$ is strictly below $\root{x}$)}. In this case $N_{>,\Gamma}$ simply calls the subprogram $\mathcal{B}_{\Gamma}$ guaranteed by Lemma~\ref{lem:mega-lemma} for the vertices $\root{x}$ and $\root{y}$ and returns the same value as $\mathcal{B}_{\Gamma}$.

This finishes the description of the branching program. It is clear by the construction that it really returns $1$ if and only if $x\leq y$ in $P$. 

One final thing we need to do in order to prove Theorem~\ref{thm:main} is to count how many linear orders does this branching program use. For this, first note that for fixed $\Gamma$ (resp.~fixed $\Delta$) the child node $N_{>,\Gamma}$ (resp.~$N_{<,\Delta}$) of the node $N_{\Gamma,\Delta}$ is the same for every $\Delta$  (resp.~every $\Gamma$) -- we think of them as being identical copies of the same node -- and hence depend on the same set of linear orders, namely three linear orders related to the Set Membership Tool (Tool~\ref{cor:set-memb}), the left-first-search and right-first-search order of the vertices of $T$ (the same for every node) and the $3^{|\Gamma|+1}-1$ (resp.~$3^{|\Delta|+1}-1$) linear orders related to the subprogram $\calB_{\Gamma}$ (resp.~$\calB_{\Delta}$). In addition to this the root node uses two more linear orders to identify the case $x=y$, which in total gives at most
\begin{align*}
&  2+\sum_{\Gamma,\Delta:\delta_1=\gamma_1}\Big(3+(3^{|\Gamma|+1}-1)+3+(3^{|\Delta|+1}-1)\Big)+2\\
&\leq 4+\sum_{\Gamma}(3+3^{|\Gamma|+1}-1)+\sum_{\Delta}(3+3^{|\Delta|+1}-1)=4+2\sum_{\ell=1}^{5^{k+1}}\binom{5^{k+1}}{\ell}(2+3^{\ell+1})
\\
&=4+4\sum_{\ell=1}^{5^{k+1}}\binom{5^{k+1}}{\ell}+6\sum_{\ell=1}^{5^{k+1}}3^{\ell}=4+4(2^{5^{k+1}}-1)+6((3+1)^{5^{k+1}}-1)\\
&=6\cdot 4^{5^{k+1}}+4\cdot 2^{5^{k+1}}-6.
\end{align*}
linear orders, and so results $\bdim{P} \leq 6\cdot 4^{5^{k+1}}+4\cdot 2^{5^{k+1}}-6$. As the upper bound on $\bdim{P}$ only depends on $k$, this finishes the proof of Theorem~\ref{thm:main}.

\section{A connection to reachability labeling schemes}\label{sec:reachability}

Boolean realizers have a natural connection to labeling schemes for
reachability queries for families of directed graphs. 
A similar application was already observed by Gambosi, Ne{\v{s}}et{\v{r}}il and Talamo~\cite{GNT87}.
For a family $\vec{\calG}$ of digraphs, a \emph{labeling}
is a non-negative integer function $L$ that assigns for every digraph $\vec{G}\in \vec{\calG}$ a \emph{label}
$L(v,\vec{G})$ to each vertex $v$ of $\vec{G}$. A \emph{reachability decoder} is a function $f$ that given
two labels $\lambda_1,\lambda_2$, returns a binary value
$f(\lambda_1,\lambda_2)$.  Now, a pair $(L,f)$ is called a \emph{reachability
  labeling scheme}, if for every digraph $\vec{G}\in \vec{\calG}$ and every pair of vertices $u,v$ in $\vec{G}$ there is a path from $u$ to $v$ in $\vec{G}$ if and only if $f(L(u,\vec{G}),L(v,\vec{G}))=1$. 

Formally we see labels as binary
strings and given a label $L(v,\vec{G})$, we put $|L(v,\vec{G})|$ for its length as a binary string. 
Then the \emph{size} of a reachability labeling scheme $(L,f)$ (as a function of $n$) is 
\[
\max_{\vec{G}\in \vec{\calG}, |V(\vec{G})|=n}\max_{v\in V(\vec{G})}|L(v,\vec{G})|.
\] 
The central question in this area is to determine how small can reachability labeling schemes be for different families of graphs. A particularly interesting case is when $\vec{\calG}$ is the family of planar digraphs. Along this line, Thorup~\cite{Th04} presented a reachability labeling scheme of size $O(\log^2{n})$ for planar digraphs on $n$ vertices; however, it still remains a
challenge to answer the following question.

\begin{problem}
Is there a reachability labeling scheme of size $O(\log{n})$ for planar digraphs on $n$ vertices?
\end{problem}

We remark that a positive answer to a stronger version of Problem~\ref{problem:planar} would result, as argued in the observation below, in a positive answer for the above problem as well. 

\vspace{0.1cm}

\textbf{Observation.} Let $\calG$ be a family of (undirected) graphs which is closed under taking minors. Suppose that the boolean dimension of posets whose cover graph belongs to $\calG$ is bounded from above by some constant $k$ in a strong sense: the boolean formula participating in the boolean realizer can be chosen to be the formula $\phi$ for every poset. Note that in this case, the boolean realizers should contain exactly $k$ linear orders for every poset. Then there exists a reachability labeling scheme of size $k\cdot \log n$ for the family $\vec{\calG}$ of all digraphs whose undirected variant belongs to $\calG$.

Indeed, to define the appropriate labeling, let $\vec{G}$ be and arbitrary digraph on $n$ vertices from $\vec{\calG}$ . 
Note that for any reachability labeling, we may assume that vertices along a directed cycle all get the same label, so it is enough to deal with the acyclic 
digraph $\vec{G'}$ on $n'\leq n$ vertices, also belonging to $\vec{\calG}$, that we get from $\vec{G}$ by contracting all directed cycles.
Now that $\vec{G}'$ is acyclic, it gives rise to a poset $P$, whose elements are the vertices of $\vec{G'}$ and $x\leq y$ in $P$ if and only if there is a directed path from $x$ to $y$ in $\vec{G'}$. 
So deciding whether $x\leq y$ in $P$ is the same as deciding whether $y$ is reachable from $x$ in $\vec{G'}$. 
As the cover graph of $P$ is the undirected version of $\vec{G'}\in \vec{\calG}$, and as such belongs to $\calG$, by assumption we have $\bdim P\leq k$. 
To finally define the labeling, take the $k$ linear orders that together with $\phi$ form a boolean realizer for $P$, and for an element $x\in P=V(\vec{G'})$ put into $L(x,\vec{G'})$ the respective positions of $x$ in the $k$ linear orders. 
Then the size of any label is $k\cdot \log n'\leq k\cdot \log n$, and given two elements  $x,y\in P=V(\vec{G'})$ from their labels, we can extract their relative position in the $k$ linear orders, and, using $\phi$, decide whether $x\leq y$ in $P$, i.e. whether there is a directed path from $x$ to $y$ in $\vec{G'}$.  

\vspace{0.1cm}

Note that within the proof of Theorem~\ref{thm:main}, the formula (or the branching program) witnessing small boolean dimension does not depend on the poset $P$.
Therefore, by the Observation above, we obtain a reachability labeling scheme of size $O(\log{n})$ for digraphs on $n$ vertices with bounded tree-width.
As we pointed out this recently, such a reachability labeling scheme can be obtained more directly using the elimination tree of $G$ witnessing the inequality $\td(G)\leq \tw(G)\log n$, where $\td(G)$ is the tree-depth of $G$.

\section{Posets with unbounded boolean dimension whose cover graphs have maximum degree $3$}\label{sec:topminor}

A poset $P$ is said to be an \emph{interval order} if there is an assignment $P\ni x\to[\ell_x,r_x]\subset\mathbb{R}$ such that 
for every $x,y \in P$ we have $x < y$ in $P$ if and only if $r_x < \ell_y$.
This assignment is often called an \emph{interval representation} of $P$.
A \emph{universal interval order} of order $n$ ($n\geq2$) is the poset $U_n$ with elements 
$\set{[i,j] \mid i,j\in\mathbb{N} \text{ and } 1\leq i < j \leq n}$ and 
$[i,j] < [k,\ell]$ in $U_n$ if $j < k$. The proof of the following claim is an easy modification of an argument from \cite{FHRT92}.

\begin{claim}
$\bdim{U_n} \geq \log\log\log n$.
\end{claim}
\begin{proof} 
Consider a boolean realizer of $U_n$ consisting of the linear orders $L_1,\ldots,L_d$ %of elements of $U_n$ 
and 
a $d$-ary boolean formula $\phi$.
We will show that $d\geq\log\log\log n$.

The \emph{double-shift graph} $G_n$ of order $n$ is the graph with vertex set $\set{(i,j,k)\mid i,j\in\mathbb{N} \text{ and } 1\leq i< j<k\leq n}$ 
and $(i,j,k)$ being adjacent to $(j,k,\ell)$ for all integers $i,j,k,\ell$ with $1\leq i < j<k<\ell\leq n$.
It is well known that $\chi(G_n)\geq\log\log n$, see e.g.~\cite{FelPhD}.

For a triple of integers $(i,j,k)$ with $1\leq i < j < k \leq n$, we define $\psi(i,j,k)$ to be a binary sequence 
$(z_1,\ldots,z_d)$ of length $d$ so that $z_{\alpha}=1$ if $[i,j]<[j,k]$ in $L_{\alpha}$, and $z_{\alpha}=0$ if $[j,k]<[i,j]$ in $L_{\alpha}$, 
for every $\alpha\in[d]$.
We claim that $\psi$ is a proper coloring of $G_n$.
To see that, take four integers $i,j,k,\ell$ with $1\leq i<j<k<\ell\leq n$ and suppose that $\psi(i,j,k)=(z_1,\ldots,z_d)=\psi(j,k,\ell)$.
This means that for every $\alpha\in[d]$, if $z_{\alpha}=1$ then $[i,j]<[j,k]<[k,\ell]$ in $L_{\alpha}$, and if $z_{\alpha}=0$ then $[k,\ell]<[j,k]<[i,j]$ in $L_{\alpha}$.
Since $[i,j]$ and $[j,k]$ are incomaprable in $U_n$ we have that $\phi(z_1,\ldots,z_d)=0$.
On the other hand, $[i,j] < [k,\ell]$ in $U_n$ and therefore we must have $\phi(z_1,\ldots,z_d)=1$, a contradiction.
Thus $\psi$ is a proper coloring of $G_n$. 
Since $\psi$ takes at most $2^d$ values we get that $2^d\geq \log\log n$, as desired.
\end{proof}

It is easy to see that every interval order has a \emph{distinguishing} interval representation, 
which means an interval representation with all the endpoints $\ell_x$, $r_x$ for $x\in P$ being distinct.

Now consider an interval order $P$ together with a distinguishing representation. 
Let $e_1,\ldots,e_m$ be the values of all endpoints in the representation in increasing order.
Extend the poset $P$ to $P^+$ by introducing new elements represented by the intervals $[e_i + \frac{1}{5}(e_{i+1}-e_i), e_i + \frac{2}{5}(e_{i+1}-e_i)]$, $[e_i + \frac{3}{5}(e_{i+1}-e_i), e_{i} + \frac{4}{5}(e_{i+1}-e_i)]$ for every $i\in[m-1]$.
Note that every element of $P^+$ has at most three neighbors in the cover graph.
Therefore, the maximum degree of the cover graph of $P^+$ is at most $3$ and 
since $P$ is a subposet of $P^+$ we have $\bdim{P^+}\geq \bdim{P}$.

This way we can also extend the universal interval order $U_n$ to $U^+_n$, for every $n\geq2$, 
and obtain a family of posets with cover graphs of maximum degree $3$ and unbounded boolean dimension.

%%%%%%%%%%%%%%%%%%%%%%%%%%%%%%%%%%%%%%%%%%%%%%%%%%%%%%%%%%%%%%%%%%%%%%%%%%%%%%
\bibliographystyle{plain}
\bibliography{posets-dimension}

\end{document}